\theoremstyle{plain}
\newtheorem{Pocz}{Poczatek}[section]
\newtheorem{Proposition}[Pocz]{Proposition}
\newtheorem{Theorem}[Pocz]{Theorem}
\newtheorem{Corollary}[Pocz]{Corollary}
\newtheorem{Observation}[Pocz]{Observation}
\newtheorem{Problem}[Pocz]{Problem}
\newtheorem{Example}[Pocz]{Example}
\theoremstyle{definition}
\newtheorem{Definition}[Pocz]{Definition}
\theoremstyle{remark}
\newtheorem{Remark}[Pocz]{Remark}
\DeclareMathOperator*{\diam}{diam}
\numberwithin{equation}{section}
\title[Coarse structure of ultrametric spaces with applications]
{Coarse structure of ultrametric spaces with applications}
\author{Yuankui Ma}
\address{Xi'an Technological University, No.2 Xuefu zhong lu, Weiyang district, Xi'an, China 710021}
\email{mayuankui@xatu.edu.cn}
\author{Jeremy Siegert}
\address{Ben-Gurion University of the Negev, Beer-Sheva, Israel}
\email{siegertj@post.bgu.ac.il}
\author{Jerzy Dydak}
\address{University of Tennessee, Knoxville, TN 37996, USA}
\email{jdydak@utk.edu}
\address{Xi'an Technological University, No.2 Xuefu zhong lu, Weiyang district, Xi'an, China 710021}
\email{jdydak@gmail.com}
\date{ \today
}
\keywords{asymptotic dimension, coarse geometry, ultrametric spaces, universal spaces}
\subjclass[2000]{Primary 54D35; Secondary 20F69}
\begin{document}
\maketitle
\begin{center}
\today
\end{center}

\tableofcontents

\begin{abstract}
We show how to decompose all separable ultrametric spaces into a``Lego" combinations of scaled versions of full simplices. To do this we introduce metric resolutions of large scale metric spaces, which describe how a space can be broken up into roughly independent pieces. We use these metric resolutions to define the coarse disjoint union of large scale metric spaces, which provides a way of attaching large scale metric spaces to each other in a "coarsely independent way". We use these notions to construct universal spaces in the categories of separable and proper metric spaces of asymptotic dimension $0$, respectively. In doing so we generalize a similar result of Dranishnikov and Zarichnyi as well as Nag\'orko and Bell. However, the new application is a universal space for proper metric spaces of asymptotic dimension $0$, something that eluded those authors. We finish with a description of some countable groups that can serve as such universal spaces.

\end{abstract}

\section{Introduction}

Coarse geometry considers metric spaces and abstracts spaces from a ``large scale" perspective, wherein one pays most attention to the properties that persist through ``zooming out" from the object of study while ignoring whatever small scale structure that may exist. Such ideas have existed since the early $20^{th}$ century but were first explicitly layed out by Gromov in \cite{Grom} where several properties relevant to the coarse geometry of countable groups were clearly defined. Among these was the asymptotic dimension of a metric space, which was intended to serve as a large scale analog of the classical covering dimension. This particular property would come to some prominence when Yu proved in \cite{Yu} that the Novikov conjecture is satisfied by groups of finite asymptotic dimension. Such progress and the intuitive appeal of the subject has motivated much of the study into developing the methods of coarse geometry.
\vspace{\baselineskip}

In this paper we completely characterize the coarse geometry of a large class of spaces. Specifically separable ultrametric spaces. The breadth of this class of spaces was realized when Brodskiy, Dydak, Higes, and Mitra proved in \cite{BDHM} that every separable metric space of asymptotic dimension $0$ is coarsely equivalent to a countable ultrametric space whose metric take values in the nonnegative integers. Such spaces include all countable locally finite groups (i.e. those where every finitely generated subgroup is finite). We characterize these spaces by showing how to decompose them into several scaled full simplices on certain sets of vertices. This in itself is via two concepts. The first of which is metric resolution (introduced in Section \ref{MetricResolutions}) which provides a way of distinguishing particular ``pieces" of a metric space. The second concept is that of a coarse disjoint union (introduced in Section \ref{CoarseDisjointUnions}), which describes how to put together families of metric spaces together into one space such that each metric space used in the construction is ``coarsely independent" from the other spaces used in the construction. Our characterization of ultrametric spaces is then performed by distinguishing certain pieces, each of which is a scaled full simplex, of the spaces via metric resolutions, then attaching them together via a coarse disjoint union in such a way that the resulting space is coarsely equivalent to the original. In doing this we find that there are only countably many such scaled full simplices that can be employed which alloys us in Section \ref{universal spaces} to construct universal spaces for the categories of separable and proper metric spaces of asymptotic dimension $0$, respectively. In the case of separable spaces of bounded geometry such a universal space has been constructed previously by Dranishnikov and Zarichnyi in \cite{DZ}. In the broader case of separable spaces, this has been done previously by Nagorko and Bell in \cite{BN}. T. Banakh and I. Zarichnyy constructed universal spaces for coarsely homogeneous spaces of asymptotic dimension $0$ in \cite{BZ}. The methods employed in these papers differ from the methods we employ. The universal space for proper metric spaces of asymptotic dimension $0$ is the first that appears in the literature. The question of whether or not there are universal spaces for proper metric spaces of asymptotic dimension greater than $0$ is open and seems to be an interesting question. We finish with some discussion of infinite groups that can be realized as such universal spaces.

The authors are grateful to Henryk Toru\'nczyk for helpful comments about the paper.\\
The authors are extremely grateful to the excellent referee for pointing out errors and gaps in the original version of the paper.\\
Jeremy Siegert was supported by the Israel Science Foundation grant No. 2196/20.

\section{Preliminaries}\label{Preliminaries}

We begin with the few basic preliminary definitions needed in subsequent sections. 

\begin{Definition}
A metric space $(X,d)$ is called an \textbf{ultrametric space} if in place of the usual triangle inequality for metric spaces the metric $d$ satisfies the stronger \emph{ultrametric triangle inequality} which says that for all $x,y,z\in X$

\[d(x,z)\leq\max\{d(x,y),d(y,z)\}\]
\end{Definition}

\begin{Definition}
Given a set $D$ of non-negative integers, a \textbf{$D$-ultrametric space} is an ultrametric space with all distances belonging to $D$.
If $D$ is the set of all non-negative integers, then a $D$-ultrametric space will be called an \textbf{integral ultrametric space}.
\end{Definition}

\begin{Definition}
A metric space $(X,d)$ is said to be of \text{asymptotic dimension} $0$ if for every uniformly bounded cover $\mathcal{U}$ of $X$, there is a uniformly bounded cover $\mathcal{V}$ that is refined by $\mathcal{U}$ and whose elements are disjoint. 
\end{Definition}

Alternatively, one could define for each $r>0$ the relation $\sim_{r}$ on $X$ be setting $x\sim_{r}y$ if $d(x,y)<r$. Then say that $x,y\in X$ are \textbf{r-connected} if there is a finite chain of elements $x=y_{0},y_{1},\ldots,y_{n}=y$ such that $y_{i}\sim_{r}y_{i+1}$ for $0\leq i\leq n-1$ and define the $\textbf{r-components}$ of $X$ to be maximally $r$-connected subsets of $X$. An metric space $(X,d)$ is of asymptotic dimension $0$ if and only if the collection of $r$-components of $X$ is uniformly bounded for every $r>0$. For a more in depth discussion of asymptotic dimension the reader is referred to \cite{Roe lectures}.

\begin{Definition}
A function $f:(X,d_{1})\rightarrow(Y,d_{2})$ of metric spaces is called:
\begin{enumerate}

\item \textbf{uniformly bornologous} (or \textbf{large scale continuous}) if for all $R>0$ there is an $S>0$ such that if $d_{1}(x,y)\leq R$ then $d_{2}(f(x),f(y))\leq S$.
\item \textbf{proper} if for every bounded $B\subseteq Y$, $f^{-1}(B)$ is bounded in $X$.
\item \textbf{uniformly proper} if for every $R>0$ there is an $S>0$ such that if $B\subseteq Y$ is bounded by $R$, then $f^{-1}(B)$ is bounded by $S$.
\item \textbf{coarsely surjective} if there is an $R>0$ such that for every $y\in Y$ there is an $x\in X$ such that $d_{2}(f(x),y)\leq R$.
\item a \textbf{coarse equivalence} if it is uniformly bornologous, uniformly proper, and coarsely surjective.
\item a \textbf{coarse embedding} if it is uniformly bornologous and uniformly proper.
\end{enumerate}
\end{Definition}

The following construction provides a universal construction of ultrametrics for functions $f:X\to (Y,d_Y)$ from sets to metric spaces:

\begin{Definition}\label{UniversalUltrametricDef}
Given a function $f:X\to (Y,d)$ from a set to a metric space we define
the \textbf{induced integral ultrametric} $d_f^{ul}(x,y)$ by $f$ and $d$ as the  minimum of all integers $r \ge 1$ such that there is an $r$-chain in $Y$ joining $f(x)$ and $f(y)$.
$d_f^{ul}(x,y)=0$ if $x=y$.
\end{Definition}

Recall that an \textbf{$r$-chain} joining $a,b\in Y$ is a sequence $x_0=a,\ldots,x_k=b$
such that $d(x_i,x_{i+1}) < r$ for each $i < k$.

\begin{Proposition}
If $f:(X,d_X)\to (Y,d)$ is surjective large scale continuous and $(Y,d)$ is of asymptotic dimension $0$, then $f:(X,d_f^{ul})\to (Y,d)$ is a coarse equivalence.
\end{Proposition}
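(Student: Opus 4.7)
The plan is to verify the three defining conditions of a coarse equivalence in order: coarse surjectivity, uniform properness, and uniform bornologousness. Coarse surjectivity is immediate from the hypothesis that $f$ is surjective (with constant $R=0$), so no work is required there.

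For uniform properness, I would argue directly from the definition of $d_f^{ul}$. Given $R>0$, suppose $x,y\in X$ satisfy $d(f(x),f(y))\le R$. Choose the integer $r=\lfloor R\rfloor+1$; then the two-term sequence $f(x),f(y)$ is trivially an $r$-chain in $Y$ joining the two points, since $d(f(x),f(y))\le R<r$. Hence $d_f^{ul}(x,y)\le \lfloor R\rfloor+1$, which gives a uniform bound on the $d_f^{ul}$-diameter of $f^{-1}(B)$ in terms of the $d$-diameter of $B\subseteq Y$.

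The only step that uses a real hypothesis is uniform bornologousness, and this is where asymptotic dimension $0$ of $(Y,d)$ enters. Suppose $d_f^{ul}(x,y)\le R$. Then by definition there is a positive integer $r\le R$ and an $r$-chain in $Y$ from $f(x)$ to $f(y)$. Since every consecutive distance in this chain is $<r\le R$, the same chain is also an $R$-chain, so $f(x)$ and $f(y)$ lie in a common $R$-component of $Y$. Invoking the characterization of asymptotic dimension $0$ recalled in the preliminaries, the family of $R$-components of $Y$ is uniformly bounded, say by $S=S(R)$, and therefore $d(f(x),f(y))\le S$, as required.

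I expect no serious obstacle: once the definitions are unfolded the argument is essentially tautological, apart from the single use of the asymptotic dimension $0$ hypothesis, which supplies the uniform bound on $R$-components that is needed precisely to translate an upper bound on $d_f^{ul}$ into an upper bound on $d$. It is worth noting that the large-scale continuity of $f:(X,d_X)\to(Y,d)$ does not seem to be invoked in the argument; only surjectivity of $f$ and asymptotic dimension $0$ of the target $Y$ are used to obtain the conclusion.
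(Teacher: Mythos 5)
Your proof is correct and uses essentially the same two ingredients as the paper: the inequality $d_f^{ul}(x,y)\le d(f(x),f(y))+1$ (which the paper uses to make a selection $s:Y\to X$ large scale continuous, and you use for uniform properness and coarse surjectivity) and the asymptotic-dimension-$0$ bound on the diameter of $r$-components (for large scale continuity of $f$). The only difference is presentational --- the paper packages the first half as the existence of a coarse inverse rather than checking uniform properness directly --- and your observation that the large scale continuity of $f:(X,d_X)\to(Y,d)$ is not actually used matches the paper's argument, which also never invokes $d_X$.
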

\begin{proof}
Choose a selection $s:Y\to X$ for $f$.
Since $id_d^{ul}(x,y)\leq d(f(x),f(y))+1$ for all $x,y\in X$, the function $s:(Y,d)\to (X,d_f^{ul})$ is large scale continuous. If $(Y,d)$ is of asymptotic dimension $0$ and $r > 0$
there is $S > 0$ such that every two points in $Y$ that can be connected by an $r$-chain have to be of distance at most $S$. That shows
$f:(X,d_f^{ul})\to (Y,d)$ is a large scale continuous map as well.

\end{proof}

\begin{Corollary}\label{Asdim0ViaUltrametrics}
Suppose $(X,d)$ is a metric space. The following conditions are equivalent:\\
1. $(X,d)$ is of asymptotic dimension $0$.\\
2. $id:(X,d_{id}^{ul})\to (X,d)$ is large scale continuous.\\
3. $id:(X,d)\to (X,d_{id}^{ul})$ is a coarse equivalence.
\end{Corollary}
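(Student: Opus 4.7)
The plan is to prove the cycle of implications $(1)\Rightarrow(3)\Rightarrow(2)\Rightarrow(1)$. None of the three steps is expected to be difficult: the preceding Proposition delivers $(1)\Rightarrow(3)$ almost verbatim, and the other two implications amount to unfolding definitions, using that both sides of the identity map have the same underlying set.

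For $(1)\Rightarrow(3)$, I would apply the preceding Proposition to the surjective large scale continuous map $id:(X,d)\to (X,d)$, which (since $(X,d)$ has asymptotic dimension $0$) yields that $id:(X,d_{id}^{ul})\to (X,d)$ is a coarse equivalence. The only thing to check is that this also gives a coarse equivalence in the opposite direction, as asserted in (3). Because the map is a bijection on underlying sets, uniform bornologousness of $id:(X,d_{id}^{ul})\to (X,d)$ is literally uniform properness of $id:(X,d)\to (X,d_{id}^{ul})$, and conversely; coarse surjectivity is automatic in either direction. Hence (3) follows.

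For $(3)\Rightarrow(2)$, I would simply note that a coarse equivalence is in particular uniformly proper, and that uniform properness of $id:(X,d)\to (X,d_{id}^{ul})$ says exactly that any set of $d_{id}^{ul}$-diameter at most $R$ has $d$-diameter at most some $S=S(R)$. That is precisely large scale continuity of $id:(X,d_{id}^{ul})\to (X,d)$, which is (2).

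For $(2)\Rightarrow(1)$, I would fix $r>0$, let $n$ be an integer with $n\geq r$, and observe that any two points $x,y$ in a common $r$-component of $(X,d)$ are joined by an $r$-chain, which is automatically an $n$-chain, whence $d_{id}^{ul}(x,y)\leq n$ by definition. Condition (2) applied at $R=n$ then produces an $S>0$ with $d(x,y)\leq S$ for all such pairs, so the $r$-components are uniformly bounded. By the alternative characterization of asymptotic dimension $0$ recorded just after its definition, this gives (1).

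The only point that carries even mild subtlety is the bijection argument in $(1)\Rightarrow(3)$, which requires one to notice the symmetry of the three defining properties of a coarse equivalence under swapping source and target when the map is a bijection; everything else is a direct translation between $r$-chains in $(X,d)$ and integer distances in $d_{id}^{ul}$.
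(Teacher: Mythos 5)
Your proof is correct and follows essentially the route the paper intends: the paper gives no explicit proof of this Corollary, presenting it as an immediate consequence of the preceding Proposition, and your implication $(1)\Rightarrow(3)$ is exactly that application, with the remaining implications being the routine definition-unwinding (bijection symmetry of coarse equivalences, and the translation between $r$-chains and $d_{id}^{ul}$-distances) that the paper leaves implicit.
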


The following is a generalization of ultrametric spaces.

\begin{Definition}\label{IsoscelesSpaceDef}
A metric space $(X,d)$ is called an \textbf{isosceles space} if every triangle in $X$ is isosceles.
\end{Definition}

\begin{Proposition}
Every isosceles space $(X,d)$ is of asymptotic dimension at most $0$.
\end{Proposition}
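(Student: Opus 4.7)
The plan is to apply the equivalent characterization of asymptotic dimension $0$ stated just after Definition of asdim in the excerpt: $(X,d)$ has asymptotic dimension $\leq 0$ if and only if for every $r > 0$ the collection of $r$-components of $X$ is uniformly bounded. Fixing $r > 0$, I will show that every $r$-component of $(X,d)$ has diameter at most $2r$, which is a uniform bound depending only on $r$.

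The core claim to establish is: if $x_0, x_1, \ldots, x_n$ is an $r$-chain in $X$, meaning $d(x_i, x_{i+1}) < r$ for $0 \le i < n$, then $d(x_0, x_n) \leq 2r$. Once this is in hand, any two points in the same $r$-component are connected by such a chain, so the $r$-component has diameter $\le 2r$ and the desired uniform bound follows.

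I would prove the claim by induction on $n$. The cases $n = 0$ and $n = 1$ are trivial since $d(x_0, x_0) = 0$ and $d(x_0, x_1) < r \leq 2r$. For the inductive step, assume $d(x_0, x_n) \leq 2r$ and examine the triangle with vertices $x_0$, $x_n$, $x_{n+1}$. The isosceles hypothesis forces at least two of the three side lengths $d(x_0, x_n)$, $d(x_n, x_{n+1})$, $d(x_0, x_{n+1})$ to coincide, and I would split into three cases according to which pair. In the case $d(x_0, x_n) = d(x_n, x_{n+1})$, the common value is strictly less than $r$, and the ordinary metric triangle inequality yields $d(x_0, x_{n+1}) < 2r$. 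In the case $d(x_0, x_n) = d(x_0, x_{n+1})$, the inductive hypothesis gives $d(x_0, x_{n+1}) \leq 2r$ immediately. In the case $d(x_n, x_{n+1}) = d(x_0, x_{n+1})$, we obtain the stronger bound $d(x_0, x_{n+1}) < r$. In all three cases $d(x_0, x_{n+1}) \leq 2r$, which completes the induction.

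I do not anticipate any significant obstacle: the potentially delicate step is the case analysis in the inductive argument, but the isosceles condition interacts very cleanly with the structure of $r$-chains. The only loss compared with the ultrametric setting, where one would get diameter $< r$, is a factor of two coming from the single case in which the two chain-edge lengths are the equal pair of the isosceles triangle.
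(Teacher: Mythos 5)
Your proof is correct and takes essentially the same approach as the paper's: both reduce the statement to showing that the endpoints of any $r$-chain lie within distance $2r$ of each other, proved by induction on chain length using the isosceles condition on the triangle $x_0,x_n,x_{n+1}$. The paper packages the conclusion as $2$-Lipschitzness of $id:(X,d_{id}^{ul})\to(X,d)$ rather than as a uniform bound on $r$-components, but these formulations are interchangeable, and your three-case analysis simply spells out the step the paper dismisses as obvious.
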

\begin{proof}
In view of \ref{Asdim0ViaUltrametrics} it suffices to show $id:(X,d_{id}^{ul})\to (X,d)$ is large scale continuous. Suppose $x_0,\ldots,x_k$ is an $m$-chain joining $x$ to $y$ in $X$. If $k\leq 2$, then obviously $d(x,y) < 2m$.
If $k > 2$, then $d(x_0,x_3) < 2m$ and continuing by induction, we conclude
$d(x,y) < 2m$. Thus, $id:(X,d_{id}^{ul})\to (X,d)$ is $2$-Lipschitz, hence large scale continuous.
\end{proof}

The following result from \cite{BDHM} is the critical observation needed to construct the universal spaces in section \ref{universal spaces}. Our version is slightly more general and it follows from \ref{Asdim0ViaUltrametrics}.

\begin{Theorem}\label{countableintegral}
If $(X,d_X)$ is a separable metric space of asymptotic dimension zero, then
there is a countable integral ultrametric space $(Y,d)$ coarsely equivalent to $(X,d_X)$. Moreover, if $X$ is proper (in the sense that its bounded subsets have compact closure), then $Y$ can be chosen to have finite bounded subsets only.
\end{Theorem}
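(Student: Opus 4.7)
The plan is to use Corollary \ref{Asdim0ViaUltrametrics} to replace $d_X$ by an integral ultrametric on the same underlying set, and then to pass to a suitable countable $1$-dense subset, choosing the subset more carefully in the proper case.

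First I would apply Corollary \ref{Asdim0ViaUltrametrics}: since $\asdim(X,d_X)=0$, the identity $id:(X,d_X)\to (X,d_{id}^{ul})$ is a coarse equivalence. By the very definition of $d_{id}^{ul}$, its values are non-negative integers, and concatenation of $r$-chains immediately yields the ultrametric inequality, so $(X,d_{id}^{ul})$ is an integral ultrametric space. Thus $X$ itself, re-metrized, is already an integral ultrametric space coarsely equivalent to $(X,d_X)$; all that remains is to cut it down to a countable (and, in the proper case, boundedly finite) subset.

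For the first assertion, I would fix a countable dense subset $D\subseteq X$ in the topology of $d_X$ and set $Y:=(D,d_{id}^{ul}|_D)$, which inherits the integral ultrametric structure. Since $D$ is (topologically) dense it is in particular $1$-dense in $(X,d_X)$, so the inclusion $(D,d_X|_D)\hookrightarrow (X,d_X)$ is a coarse equivalence. Composing with the coarse equivalence $id:(X,d_X)\to (X,d_{id}^{ul})$ gives that $Y\hookrightarrow (X,d_{id}^{ul})$ is a coarse equivalence, so $Y$ is coarsely equivalent to $(X,d_X)$.

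For the ``moreover'' clause I would replace $D$ by a maximal $1$-separated subset of $(X,d_X)$, obtained via Zorn's lemma. Maximality forces $1$-density, so the argument of the preceding paragraph still gives a coarse equivalence $Y\simeq (X,d_X)$. A proper metric space is $\sigma$-compact and hence separable, so $D$ is countable. Any $d_X$-bounded subset of $D$ lies in a ball whose closure is compact, and being $1$-separated inside a compact set it must be finite; since a coarse equivalence preserves the class of bounded sets in both directions (uniformly bornologous plus uniformly proper), the same holds with $d_{id}^{ul}|_D$ in place of $d_X|_D$. The only delicate point in the whole proof is this last translation between the two boundedness notions, and it follows at once from the coarse equivalence property; everything else is a direct application of definitions.
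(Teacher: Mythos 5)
Your proof is correct and follows essentially the same route as the paper: the paper's own (two-line) proof also takes a maximal $1$-separated subset $Y$ of $X$ and equips it with the integral ultrametric coming from Corollary \ref{Asdim0ViaUltrametrics}. The only cosmetic difference is that for the separability clause you use a countable topologically dense subset, whereas the paper uses the maximal $1$-separated set for both clauses at once; the supporting details you supply (density implies coarse equivalence of the inclusion, and transfer of boundedness through the coarse equivalence) are exactly what the paper leaves implicit.
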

\begin{proof}
Choose a maximal subset $Y$ of $X$ such that $d_X(x,y) > 1$ if $x\ne y$ in $Y$.
Use the integral ultrametric $d$ on $Y$ from \ref{Asdim0ViaUltrametrics}.
\end{proof}

\section{Metric resolutions}\label{MetricResolutions}

In this section we introduce a formal definition of ``Lego" combinations of metric spaces.
\begin{Definition}\label{MetricResolutionDef}
A \textbf{metric resolution} is a surjective function $f:X\to S$ of metric spaces so that $d_X(x,y)=d_S(f(x),f(y))$ if $f(x)\ne f(y)$.
\end{Definition}

\begin{Example}
$d_f^{ul}$
from Definition \ref{UniversalUltrametricDef}, provided $f:X\to Y$ is surjective,
is a metric resolution if $Y$ is given the integral ultrametric induced by $d_Y$.
\end{Example}

In applications, metric resolutions will be constructed as indexing functions $i:\coprod\limits_{t\in S} X_s\to S$ (that means $i(x)=t$ if $x\in X_t$)
from a disjoint union $ \coprod\limits_{t\in S} X_s$ of metric spaces $(X_t,d_t)$ so that there is a metric $d_S$ on $S$ and the metric $d$ on $ \coprod\limits_{t\in S} X_t$ having the following properties:\\
1. $d$ restricted to each $X_t$ equals $d_t$,\\
2. $d(x,y)=d_S(u,t)$ if $x\in X_u$, $y\in X_t$, and $u\ne t$.

\begin{Proposition}\label{ExistenceOfMetricResolutions}
Suppose $f:X\to S$ is a surjective function and $d_S$ is a metric on $S$. Suppose $d_t$ is a metric on $X_t:=f^{-1}(t)$ for each $t\in S$.
If the diameter of each $X_t$ is at most $0.5\cdot d_S(u,t)$ for all $u\ne t$, then there is a unique metric $d$ on 
$X$ making $f$ a metric resolution. Moreover,
if $(X_t,d_t)$, $t\in S$, are isosceles and $(S,d_S)$ is isosceles, then the total $(X,d)$ is isosceles as well.
\end{Proposition}
\begin{proof}
Obviously, there is at most one $d$ that can be considered as the total metric on $X$. Given $x,y,z\in X$, we need to show the Triangle Inequalities in each of the two cases. It is so if they belong to three different fibers of $f$. Also, if $x, z$ belong to the same fiber of $f$.
Assume $x\in X_a$, $z\in X_b$, and $a\ne b$. The Triangle Inequalities are obvious if $y\in X_a$ or $y\in X_b$ due to the requirement that the diameter of each $X_t$ is at most $0.5\cdot d_S(u,t)$ for all $u\ne t$.
\end{proof}

\begin{Proposition}\label{ExistenceOfUltraMetricResolutions}
Suppose $f:X\to S$ is a surjective metric resolution and $d_S$ is an ultrametric on $S$.
If the diameter of each $X_t:=f^{-1}(t)$ is at most $d_S(u,t)$ for all $u\ne t$, $(X_t,d|X_t)$, $t\in S$, are ultrametrics, then the total $(X,d)$ is ultrametric as well.
\end{Proposition}
\begin{proof}
Given $x,y,z\in X$, we need to show the Ultrametric Triangle Inequality. It is so if they belong to three different fibers of $f$. Also, if $x, z$ belong to the same fiber of $f$.
Assume $x\in X_a$, $z\in X_b$, and $a\ne b$. The Ultrametric Triangle Inequality are obvious if $y\in X_a$ or $y\in X_b$ due to the requirement that the diameter of each $X_t$ is at most $d_S(u,t)$ for all $u\ne t$. 
\end{proof}

\begin{Proposition}\label{UltrametricResolution}
Suppose $(X,d)$ is an ultrametric space, $x_0\in X$, and $S$ is the set of all possible values of $d(x_0,x)$, $x\in X$. If $d_S$ is defined by $d(u,t)=\max(u,t)$ for $u\neq t$, then $d_S$ is an ultrametric on $S$ and $i:X\to S$ defined by $i(x)=d(x_0,x)$ is a metric resolution.
\end{Proposition}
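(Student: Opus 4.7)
The plan is to verify the two assertions in turn: first that $d_S$ defines an ultrametric on $S$, and then that the map $i$ satisfies the defining identity of a metric resolution. Neither step involves any real obstacle beyond one standard fact about ultrametrics, which I will record as the key lemma below.

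For the first assertion, set $d_S(u,t)=\max(u,t)$ when $u\ne t$ and $d_S(u,u)=0$. Symmetry is built in. For positive definiteness note that $S\subseteq[0,\infty)$, and if $u\ne t$ then at most one of them equals $0$, so $\max(u,t)>0$. For the ultrametric triangle inequality, let $u,v,w\in S$ be arbitrary. The inequality
\[
d_S(u,w)\le\max\bigl(d_S(u,v),d_S(v,w)\bigr)
\]
is trivial if $u=w$, so assume $u\ne w$. If $v\in\{u,w\}$, then one of $d_S(u,v),d_S(v,w)$ already equals $\max(u,w)=d_S(u,w)$ and we are done. Otherwise all three elements are pairwise distinct, and $\max(d_S(u,v),d_S(v,w))=\max(u,v,w)\ge\max(u,w)=d_S(u,w)$. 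Hence $(S,d_S)$ is an ultrametric space.

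For the second assertion, the map $i(x)=d(x_0,x)$ is surjective onto $S$ by the definition of $S$. The content is the identity $d(x,y)=d_S(i(x),i(y))$ whenever $i(x)\ne i(y)$. The main (and only) step is the standard fact that in an ultrametric space every triangle is isosceles with the two longest sides equal. Concretely, if $d(x_0,x)\ne d(x_0,y)$, assume without loss of generality $d(x_0,x)<d(x_0,y)$. The ultrametric inequality gives $d(x,y)\le\max(d(x,x_0),d(x_0,y))=d(x_0,y)$, while $d(x_0,y)\le\max(d(x_0,x),d(x,y))$ forces $d(x,y)\ge d(x_0,y)$ (otherwise the right-hand side would equal $d(x_0,x)<d(x_0,y)$, a contradiction). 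Therefore
\[
d(x,y)=d(x_0,y)=\max\bigl(d(x_0,x),d(x_0,y)\bigr)=d_S(i(x),i(y)),
\]
which is exactly the metric resolution property in Definition \ref{MetricResolutionDef}.

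The main potential obstacle is entirely located in the isosceles-triangle step; once that is in hand, both claims are immediate. I would present the proof in essentially the order above: first a short computation showing $d_S$ is an ultrametric, then invoke the isosceles property of ultrametric triangles to obtain the required equality, and conclude that $i$ is a metric resolution.
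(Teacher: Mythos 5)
Your proof is correct and follows essentially the same route as the paper's: the paper asserts that $d_S$ is "clearly an ultrametric" and then derives $d(x,y)=\max(u,t)$ for $u=i(x)<t=i(y)$ directly from the ultrametric triangle inequality, which is exactly your isosceles-triangle step. You have merely filled in the details the paper leaves implicit.
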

\begin{proof}
$d_S$ is clearly an ultrametric. Suppose $i(x)=u < t=i(y)$. By the Ultrametric Triangle Inequality
we get $d(x,y)=t=\max(u,t)=d_S(i(x),i(y))$.
\end{proof}

\begin{Proposition}\label{FiniteUltrametricResolution}
Suppose $D$ is a finite subset of non-negative integers and $(X,d)$ is a $D$-ultrametric space. If $m=\max(D)$, then there is a metric resolution
$i:X\to m\cdot C$ for some full simplex $C$.
\end{Proposition}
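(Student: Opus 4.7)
The plan is to quotient $X$ by the equivalence relation of being at distance strictly less than $m$, and check that this quotient map, with the quotient given the scaled full-simplex metric, is the desired metric resolution.

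First I would define on $X$ the relation $x\sim y$ iff $d(x,y) < m$ (with $x\sim x$ for all $x$). The only nontrivial point is transitivity: if $d(x,y) < m$ and $d(y,z) < m$, then the ultrametric triangle inequality gives $d(x,z)\le \max(d(x,y),d(y,z)) < m$. So $\sim$ is an equivalence relation. Let $C = X/\!\sim$ and equip $C$ with the full-simplex metric (distinct equivalence classes at distance $1$), so that in $m\cdot C$ distinct classes lie at distance $m$. Let $i:X\to m\cdot C$ send each point to its equivalence class; this is surjective by construction.

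Next I would verify the metric resolution condition: whenever $i(x)\ne i(y)$, we must have $d_X(x,y) = d_{m\cdot C}(i(x),i(y)) = m$. If $i(x)\ne i(y)$, then $x\not\sim y$, so $d_X(x,y)\ge m$. But $d_X(x,y)\in D$ and $m=\max(D)$, forcing $d_X(x,y)=m$, as required.

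There is no real obstacle here; the whole argument hinges on two simple observations: that the ultrametric triangle inequality upgrades "$<m$" to an equivalence relation, and that finiteness of $D$ together with $m=\max(D)$ forces any distance $\ge m$ to equal $m$. The only thing one might want to be careful about is that the full simplex $C$ may be infinite (if $X$ has infinitely many $\sim$-classes), but the definition of a full simplex places no cardinality restriction, so this causes no issue.
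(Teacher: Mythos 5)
Your proof is correct and follows essentially the same route as the paper: quotient by the relation $d(x,y)<m$ (transitive by the ultrametric inequality), observe that distinct classes are at distance exactly $m$ since $m=\max(D)$, and let $i$ be the quotient map onto $m\cdot C$. You spell out the details (transitivity, the forcing of $d(x,y)=m$) that the paper leaves implicit, but the argument is the same.
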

\begin{proof}
Consider the equivalence relation $\sim$ on $X$ defined by $x\sim y$ if $d(x,y)< m$. Let $C$ be the set of all equivalence classes. Notice that
$d(x,y)=m$ if $x$ and $y$ belong to different equivalence classes.
Define $i(x)$ as the equivalence class of $[x]$ of $x$.
\end{proof}

\begin{Proposition}
Suppose $(X,d)$ is an ultrametric space. The function $\rho(A,B)=\sup\{d(x,y)| x\in A, y\in B\}$ for $A\ne B$ defines an ultrametric on the space of non-empty bounded subsets of $X$.
\end{Proposition}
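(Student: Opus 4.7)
The plan is to verify the ultrametric axioms for $\rho$ on the space $\mathcal{B}$ of non-empty bounded subsets of $X$, under the convention $\rho(A,A) = 0$ (the supremum formula only defines $\rho$ for $A \ne B$). Symmetry is immediate from the symmetry of $d$. For finiteness, fix $a_0 \in A$ and $b_0 \in B$; then for any $x \in A$, $y \in B$ two applications of the ultrametric inequality give
$$d(x,y) \le \max\{\diam(A), d(a_0,b_0), \diam(B)\},$$
which is finite by the boundedness hypothesis, so $\rho(A,B) < \infty$.

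For non-degeneracy, suppose $A \ne B$ and $\rho(A,B) = 0$. Then $d(a,b) = 0$ for every $a \in A$ and every $b \in B$, forcing $a = b$ for all such pairs. This can only happen if $A = B = \{x_0\}$ is a common singleton, contradicting $A \ne B$. Hence $\rho(A,B) > 0$ whenever $A \ne B$.

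The main step is the strong triangle inequality $\rho(A,C) \le \max\{\rho(A,B), \rho(B,C)\}$. The cases in which any two of $A,B,C$ coincide are trivial from the convention, so assume all three are distinct. Pick arbitrary $x \in A$, $z \in C$, and any auxiliary $y \in B$. The ultrametric inequality in $X$ yields
$$d(x,z) \le \max\{d(x,y), d(y,z)\} \le \max\{\rho(A,B), \rho(B,C)\}.$$
Since the right-hand side does not depend on $x, z$, taking the supremum over $x \in A$ and $z \in C$ on the left delivers the claimed inequality.

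I do not anticipate a serious obstacle: the argument is essentially the pointwise ultrametric inequality lifted to sets by the standard sup-trick, and the only mild care needed is to record the convention $\rho(A,A) = 0$ and to verify non-degeneracy, both of which reduce to one-line observations.
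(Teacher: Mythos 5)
Your proof is correct and uses the same key step as the paper: choose an auxiliary point $y\in B$ and lift the pointwise ultrametric inequality to the suprema (the paper phrases this as a proof by contradiction, but the content is identical). You additionally verify finiteness, symmetry, and non-degeneracy, which the paper leaves implicit.
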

\begin{proof}
If $\rho(A,C) > \max(\rho(A,B),\rho(B,C))$, then there exist points
$a\in A$ and $c\in C$ such that $d(a,c) > \max(\rho(A,B),\rho(B,C))$.
Choose $b\in B$. Now, $d(a,c)\leq \max(d(a,b),d(b,c))\leq \max(\rho(A,B),\rho(B,C))$, a contradiction.
\end{proof}

\begin{Proposition}
If $i:X\to S$ is a metric resolution and $X$ is an ultrametric space,
then the distance between $u$ and $t$ in $S$, $u\ne t$, is the same as the distance between $i^{-1}(u)$ and $i^{-1}(t)$ in $X$.
\end{Proposition}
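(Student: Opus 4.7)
The plan is to unpack the definition of metric resolution directly. Fix $u \neq t$ in $S$, and take any $x \in i^{-1}(u)$ and $y \in i^{-1}(t)$. Since $i(x) = u \neq t = i(y)$, the defining property in Definition \ref{MetricResolutionDef} gives $d_X(x,y) = d_S(i(x), i(y)) = d_S(u,t)$. In other words, every cross-fiber distance between $i^{-1}(u)$ and $i^{-1}(t)$ is the \emph{constant} value $d_S(u,t)$, so any reasonable aggregate distance (infimum, supremum, Hausdorff) between the two fibers coincides with $d_S(u,t)$.

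Since the immediately preceding proposition introduces the distance function $\rho(A,B) = \sup\{d(x,y) \mid x\in A,\, y\in B\}$ only on bounded subsets, I would briefly verify that the fibers are bounded; this is where the ultrametric hypothesis enters. Pick any $y \in i^{-1}(t)$; for $x, x' \in i^{-1}(u)$, the ultrametric triangle inequality together with the constancy established above gives
\[
d_X(x, x') \leq \max\bigl(d_X(x,y), d_X(x',y)\bigr) = d_S(u,t),
\]
so $\diam(i^{-1}(u)) \leq d_S(u,t) < \infty$. Once boundedness is in hand, the equality $\rho(i^{-1}(u), i^{-1}(t)) = d_S(u,t)$ follows instantly from the first paragraph, because the supremum of a constant family equals that constant.

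There is essentially no main obstacle here: the statement is a bookkeeping corollary, and all the substantive content was already packaged into the definition of a metric resolution. The only subtlety worth flagging is interpreting "the distance between $i^{-1}(u)$ and $i^{-1}(t)$" consistently with the preceding proposition, which is why the ultrametric hypothesis on $X$ is included — it ensures the fibers are bounded and the $\rho$-distance is defined.
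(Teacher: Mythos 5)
Your proof is correct and takes essentially the same approach as the paper, which simply observes that $d_X(x,y)=d_S(u,t)$ for every $x\in i^{-1}(u)$ and $y\in i^{-1}(t)$ by the definition of a metric resolution. Your extra paragraph verifying that the fibers are bounded (so that the $\rho$ of the preceding proposition applies) is a reasonable refinement the paper omits, but it does not change the substance of the argument.
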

\begin{proof}
If $x\in i^{-1}(u)$ and $y\in i^{-1}(t)$, then $d_X(x,y)=d_S(u,t)$.
Therefore the distance between $u$ and $t$ in $S$, $u\ne t$, is the same as the distance between $i^{-1}(u)$ and $i^{-1}(t)$ in $X$.
\end{proof}

\section{Splicing of metrics}

In this section we will discuss a more general concept than both $d_f^{ul}$
in Definition \ref{UniversalUltrametricDef} and metric resolutions in Section \ref{MetricResolutions}. The idea is to splice metrics on the fibers of a function $f:X\to S$ with a metric $d_S$ on $S$. The construction in this section is on the other side of spectrum than metric resolutions. There, fibers are basically parallel, in the new concept different fibers diverge at infinity.

\begin{Definition}\label{SplicingMetricsViaASection}
Given a surjective function $f:X\to S$ from a set to a metric space $(S,d_S)$, given metrics $d_t$ on fibers $f^{-1}(t)$, $t\in S$, and given a section $g:S\to X$, we define the \textbf{splicing $d$ of metrics along $g$} on $X$ as follows:\\
1. $d$ restricted to each fiber $f^{-1}(t)$ equals $d_t$.\\
2. If $x\in f^{-1}(u)$, $y\in f^{-1}(t)$ and $u\ne t$, then
$$d(x,y)=\max(d_S(u,t),d_u(x,g(u)),d_t(y,g(t))).$$
\end{Definition}

\begin{Example}
A metric resolution $f:X\to S$ is a splicing of metrics on fibers with a metric on $S$ along any section $g:S\to X$, i.e. all sections give the same metrics, if the diameters of fibers satisfy the following condition: $\diam(f^{-1}(u))\leq dist(u,S\setminus \{u\})$ for all $u\in S$.
\end{Example}
\begin{proof}
In this case the formula $d(x,y)=\max(d_S(u,t),d_u(x,g(u)),d_t(y,g(t)))$
yields $d_S(u,t)$ if $u\ne t$.
\end{proof}

\begin{Proposition}
A splicing $d$ of metrics is a metric. If the metric $d_S$ on $S$ and all the metrics on the fibers of $f$ are ultrametrics, then so is $d$.
\end{Proposition}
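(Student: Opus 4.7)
The plan is to verify the metric axioms directly, then observe that the ultrametric case is a parallel but strictly easier argument since sums get replaced by maxima. Symmetry and positive-definiteness are immediate from the definition: on each fiber we use $d_t$, which is a metric, and across fibers we take a maximum that includes the strictly positive term $d_S(u,t)$, and the expression is plainly symmetric in the two points.

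The main task is the triangle inequality $d(x,z)\le d(x,y)+d(y,z)$, which I would handle by a case analysis based on which of $x,y,z$ share a fiber. Writing $f(x)=u$, $f(y)=v$, $f(z)=w$, the nontrivial cases are: (i) $u=v\ne w$ (and its mirror $u\ne v=w$); (ii) $u=w\ne v$; (iii) $u,v,w$ pairwise distinct. In case (i) the comparison reduces, inside the max defining $d(x,z)$, to $d_u(x,g(u))\le d_u(x,y)+d_u(y,g(u))$, which is just the triangle inequality for $d_u$, while the other two entries $d_S(u,w)$ and $d_w(z,g(w))$ already appear in $d(y,z)$. In case (ii) we route through the section value: $d_u(x,z)\le d_u(x,g(u))+d_u(g(u),z)$, and the two summands are dominated by $d(x,y)$ and $d(y,z)$ respectively. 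In case (iii) each of the three entries $d_S(u,w)$, $d_u(x,g(u))$, $d_w(z,g(w))$ in $d(x,z)$ is already bounded by one of $d(x,y)$ or $d(y,z)$ (using the triangle inequality on $S$ for the first), so taking the max is painless.

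For the ultrametric claim, I would repeat exactly the same case split, replacing each ``$\le\,\cdot+\cdot$'' by ``$\le\max(\cdot,\cdot)$''. The triangle steps used above all have ultrametric analogues under the hypothesis that $d_S$ and each $d_t$ are ultrametrics, so case (i) uses $d_u(x,g(u))\le\max(d_u(x,y),d_u(y,g(u)))$, case (ii) uses $d_u(x,z)\le\max(d_u(x,g(u)),d_u(g(u),z))$, and case (iii) uses the ultrametric inequality on $S$ for $d_S(u,w)$. In every case the conclusion $d(x,z)\le\max(d(x,y),d(y,z))$ drops out.

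I do not expect a real obstacle; the only place that requires any thought is case (ii), where $x$ and $z$ lie in the same fiber as each other but $y$ does not, because the definition of $d(x,y)$ and $d(y,z)$ no longer contains the term $d_u(x,z)$ directly. The key observation is that the section value $g(u)$ acts as a ``relay point'' whose fiber distance to $x$ and to $z$ is built into $d(x,y)$ and $d(y,z)$; once one sees that, both the metric and ultrametric triangle inequalities follow.
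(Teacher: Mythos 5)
Your proof is correct and follows essentially the same route as the paper's: a case analysis on which of $f(x),f(y),f(z)$ coincide, with the section value $g(u)$ serving as the relay point for the within-fiber triangle inequality and the triangle inequality on $S$ handling the cross-fiber term. The only difference is organizational (you argue directly and bound each entry of the max, while the paper argues by contradiction and splits on which entry achieves $d(x,z)$), which does not change the substance.
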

\begin{proof}
Suppose $d(x,z) > d(x,y)+d(y,z)$. Obviously, all three points $f(x), f(y), f(z)$ cannot be equal. \\
Case 1: $u=f(x)=f(z)\ne t=f(y)$. Notice $d(x,y)\ge d_u(x,g(u))$ and
$d(z,y)\ge d_u(z,g(u))$, so $d(x,y)+d(y,z)\ge d_u(x,z)$, a contradiction.\\
Case 2: $u=f(x)\ne t=f(z)$ and $v=f(y)$. \\
If $d(x,z)=d_{S}(u,t)$ and $v=u$ then $d(y,x)=d_{u}(x,y)$ and\\
 $d(y,z)=\max(d_{S}(u,t),d_{u}(y,g(u)),d_{t}(z,g(t)))$. Then our assumption that $d(x,z)>d(y,x)+d(y,z)$ becomes $d_{S}(u,t)>d_{u}(x,y)+\max(d_{S}(u,t),d_{u}(y,g(u)),d_{t}(z,g(t)))\geq d_{S}(u,t)$. That is, $d_{S}(u,t)>d_{S}(u,t)$, a contradiction. Therefore we must have that $v\neq u$. A similar contradiction appears if we assume that $v=t$. We must then have that $v$ is equal to neither $u$ nor $t$. Now, with our original assumption that $d(x,z)>d(y,x)+d(y,z)$, we have that $d(x,y)+d(y,z)\ge d_S(u,v)+d_S(v,t)\ge d_S(u,t)$, a contradiction. Without loss of generality, assume $d(x,z)=d_u(x,g(u)) > d_S(u,t)$. Now, $v$ must be equal to $u$, so 
$d(x,y)+d(y,z)=d_u(x,y)+\max(d_u(y,g(u)),d_S(u,t),d_t(g(t),z))\ge 
\max(d_u(x,y)+d_u(y,g(u)),d_S(u,t),d_t(g(t),z))\ge
\max(d_u(x,g(u)),d_S(u,t),d_t(g(t),z))=d(x,z)$, a contradiction again.

Assume the metric $d_S$ on $S$ and all the metrics on the fibers of $f$ are ultrametrics.
Suppose $d(x,z) > \max(d(x,y),d(y,z))$. Obviously, all three points $f(x), f(y), f(z)$ cannot be equal. \\
Case A: $u=f(x)=f(z)\ne t=f(y)$. Notice $d(x,y)\ge d_u(x,g(u))$ and
$d(z,y)\ge d_u(z,g(u))$, so $\max(d(x,y),d(y,z))\ge d_u(x,z)$, a contradiction.\\
Case B: $u=f(x)\ne t=f(z)$ and $v=f(y)$. 
If $d(x,z)=d_S(u,t)$, then $v\ne u,t$ (as in the Case 2 above) and $\max(d(x,y),d(y,z))\ge \max(d_S(u,v),d_S(v,t))\ge d_S(u,t)$, a contradiction. Without loss of generality, assume $d(x,z)=d_u(x,g(u))$. Now, $v$ must be equal to $u$, so \\
$\max(d(x,y),d(y,z))=\max(d_u(x,y),\max(d_u(y,g(u)),d_S(u,t),d_t(g(t),z)))= $\\
$\max(d_u(x,y),d_u(y,g(u)),d_S(u,t),d_t(g(t),z))\ge \max(d_u(x,g(u)),d_S(u,t),d_t(g(t),z))=d(x,z)$, a contradiction again.
\end{proof}

\section{Coarse disjoint unions}\label{CoarseDisjointUnions}
In this section we define the concept of a coarse disjoint union, which is a means of joining a family of metric spaces together in such a way that they are "coarsely independent" of one another. 

\begin{Definition}\label{DefCoarseDisjointUnion}
Given a family $\{(X_s,d_s)\}_{s\in S}$ of metric spaces, a \textbf{coarse disjoint union} of that family is a disjoint union $\coprod\limits_{s\in S} X_s$
equipped with a metric $d$ satisfying the following properties:\\
1. $d$ restricted to each $X_s$ equals $d_s$.\\
2. Given $M > 0$ there are bounded subsets $B_s$ of $X_s$, all but finitely many of them empty, such that if $x\in X_s\setminus B_s$ and $y\in X_t\setminus B_t$ for some $s\ne t$, then $d(x,y) > M$.
\end{Definition}

\begin{Observation} Notice that Condition 2 in \ref{DefCoarseDisjointUnion} can be split into the following two conditions:\\
2a. Every bounded subset $B$ of $\coprod\limits_{s\in S} X_s$ is contained
in $\coprod\limits_{s\in F} X_s$ for some finite $F\subset S$.\\
2b. Given $M > 0$ there is a bounded subset $B$ of $\coprod\limits_{s\in S} X_s$ such that if $x\in X_s\setminus B$ and $y\in X_t\setminus B$ for some $s\ne t$, then $d(x,y) > M$.
\end{Observation}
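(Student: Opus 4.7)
The plan is to show the equivalence of Condition 2 with the conjunction $(2a)\wedge(2b)$ by three short arguments. The conceptual point is that Condition 2 bundles two independent pieces of information: (i) bounded sets cannot spread across infinitely many pieces $X_s$, which is 2a; and (ii) there is a single global ``exceptional'' bounded region outside of which points from distinct pieces must be far apart, which is 2b. The translation between the two formulations is to either fuse the family $\{B_s\}$ into a single bounded set (forward direction) or slice the single $B$ along the fibers (backward direction).

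For $(2)\Rightarrow(2b)$ I would simply set $B:=\bigcup_{s\in S}B_s$; since only finitely many $B_s$ are nonempty and each is bounded in $X_s$, the set $B$ is a finite union of bounded subsets of $\coprod_{s\in S}X_s$ and therefore bounded, and the separation condition transfers immediately. For $(2)\Rightarrow(2a)$, given a bounded $B$ with $\diam(B)<M$, I would apply Condition 2 at this $M$ to produce sets $B_s$ nonempty only for $s$ in a finite $F\subset S$; if $B$ met $X_s$ and $X_t$ at points $x,y$ with distinct $s,t\notin F$, these points would lie outside the empty $B_s,B_t$, so Condition 2 would force $d(x,y)>M>\diam(B)$, a contradiction. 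Hence $B$ meets at most one piece outside $F$, so $B\subset\coprod_{s\in F'}X_s$ for a finite superset $F'$ of $F$.

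For $(2a)\wedge(2b)\Rightarrow(2)$ I would take the bounded $B$ from 2b, invoke 2a to confine it to a finite $F\subset S$, and define $B_s:=B\cap X_s$ for $s\in F$ and $B_s:=\emptyset$ otherwise. These are bounded with only finitely many nonempty; and if $x\in X_s\setminus B_s$, $y\in X_t\setminus B_t$, $s\ne t$, then both $x$ and $y$ lie outside $B$ (the case $s\notin F$ is automatic from $B\subset\coprod_{s\in F}X_s$), so 2b yields $d(x,y)>M$. The entire argument is bookkeeping about which bounded region plays the role of $B$; no step presents a real obstacle, and the only ingredient of substance is the elementary fact that a finite union of bounded subsets of a metric space is bounded.
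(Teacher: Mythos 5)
Your argument is correct: all three implications are verified cleanly, including the slightly less obvious step that a bounded set can meet at most one piece $X_s$ outside the finite exceptional set $F$, which is exactly what makes 2a follow from Condition 2. The paper states this as an Observation without proof, and your bookkeeping is precisely the routine verification it leaves implicit.
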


\begin{Observation} 
If a coarse disjoint union $X$ exists and each $X_s$ is non-empty, then $S$ is countable. Indeed, $X$ is the union of its balls $B(x_0,n)$, $n\ge 1$, and each ball intersects finitely many terms $X_s$.
\end{Observation}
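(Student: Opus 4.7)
The plan is to fix a basepoint $x_0\in X$ and show that each ball $B(x_0,n)$ meets only finitely many of the pieces $X_s$. Since $X=\bigcup_{n\ge 1}B(x_0,n)$ and every $X_s$ is nonempty by hypothesis, this expresses the index set of pieces actually appearing in $X$ as a countable union of finite sets, and this index set is all of $S$. To see that $B(x_0,n)$ intersects only finitely many $X_s$, I would apply Condition 2 of Definition \ref{DefCoarseDisjointUnion} with $M=2n+1$, obtaining bounded subsets $B_s\subseteq X_s$ all but finitely many of which are empty; let $F\subseteq S$ collect the finitely many indices for which $B_s\neq\emptyset$.

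For any pair of distinct indices $s,t\notin F$, we have $B_s=B_t=\emptyset$, so every $x\in X_s$ and every $y\in X_t$ satisfy $d(x,y)>2n+1$ by Condition 2. On the other hand, any two points of $B(x_0,n)$ lie at distance at most $2n$ by the triangle inequality, which is strictly less than $2n+1$. These two bounds are incompatible, so $B(x_0,n)$ can meet at most one $X_s$ with index outside $F$, and together with the finitely many $s\in F$ it meets only finitely many pieces in total. I do not foresee any real obstacle here: the argument is exactly the one line the authors sketch, with the substantive input being the clause that all but finitely many of the $B_s$ are empty, played off against the triangle inequality inside a single ball.
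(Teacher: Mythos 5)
Your argument is correct and is exactly the paper's intended one-line sketch, filled in: applying Condition 2 with $M=2n+1$ and playing the resulting separation off against the diameter bound $2n$ for $B(x_0,n)$ shows each ball meets at most $|F|+1$ pieces, and countability of $S$ follows since every $X_s$ is nonempty. No gaps.
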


\begin{Proposition}
A disjoint union $\coprod\limits_{s\in S} X_s$ is a coarse disjoint union of a family $\{(X_s,d_s)\}_{s\in S}$ of metric spaces if and only if it can be
equipped with a metric $d$ satisfying the following properties:\\
1. $d$ restricted to each $X_s$ equals $d_s$.\\
2. Given a sequence $\{x_n\}_{n\ge 1}$ of points in $\coprod\limits_{s\in S} X_s$ belonging to different parts $X_s$, one has $x_n\to\infty$ (that means $d(a,x_n)\to\infty$ for some, hence for all, $a\in \coprod\limits_{s\in S} X_s$).\\
3. Given $M > 0$ and a sequence of pairs $(x_n,y_n)$, $n\ge 1$, of points in $\coprod\limits_{s\in S} X_s$ such that $d(x_n,y_n) < M$ for all $n\ge 1$ and $x_n\to\infty$, there is $k\ge 1$ such that for each $n\ge k$ there is an index $s\in S$ so that $x_n,y_n\in X_s$.
\end{Proposition}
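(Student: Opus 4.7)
My plan is to prove both directions of the equivalence; condition 1 is common to the two formulations, so the work lies in matching the remaining conditions.

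For the forward direction, first suppose $\{x_n\}$ lies in pairwise distinct parts but does not tend to infinity; along a subsequence $d(a, x_n) \le M$ for some $a \in X$ and $M > 0$. Applying Definition \ref{DefCoarseDisjointUnion} with parameter $3M$ gives bounded sets $B_s$ with all but finitely many empty. Along the subsequence, $s_n$ eventually misses the finite set of indices with $B_s \ne \emptyset$, so $x_n \notin B_{s_n}$. For two such indices $n < m$ we have $s_n \ne s_m$, and the definition yields $d(x_n, x_m) > 3M$, contradicting the triangle-inequality bound $d(x_n, x_m) \le d(x_n, a) + d(a, x_m) \le 2M$. For condition 3, given $(x_n, y_n)$ with $d(x_n, y_n) < M$ and $x_n \to \infty$, apply Definition \ref{DefCoarseDisjointUnion} with parameter $M$; since the finite union $\bigcup_s B_s$ is bounded and $x_n \to \infty$, eventually $x_n \notin B_{s_n}$. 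Suppose for contradiction that along a subsequence $x_n$ and $y_n$ lie in distinct parts. Pass to a further subsequence on which either $y_n \notin B_{t_n}$ always, in which case the definition forces $d(x_n, y_n) > M$ against the hypothesis, or $y_n \in B_{t_n}$ always, in which case $\{y_n\}$ is confined to the bounded set $\bigcup_s B_s$ and the hypothesis $d(x_n, y_n) < M$ forces $\{x_n\}$ bounded, contradicting $x_n \to \infty$.

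For the backward direction, fix $M > 0$ and define
\[
B_s := \{x \in X_s : \text{there exist } t \ne s \text{ and } y \in X_t \text{ with } d(x,y) \le M\}.
\]
The defining property of $B_s$ immediately yields that $x \in X_s \setminus B_s$, $y \in X_t \setminus B_t$, $s \ne t$ imply $d(x, y) > M$. Each $B_s$ is bounded, for an unbounded $B_s$ admits a sequence $\{x_n\}$ with $x_n \to \infty$ in $X$ (since $d|_{X_s} = d_s$) together with partners $y_n \in X_{t_n}$ ($t_n \ne s$) at distance $\le M$, contradicting condition 3 applied with parameter $M + 1$. Finally, only finitely many $B_s$ are non-empty, for selecting $x_n \in B_{s_n}$ for pairwise distinct $s_n$ produces $x_n \to \infty$ by condition 2 with partners $y_n$ in different parts, again contradicting condition 3.

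The main obstacle I foresee is the forward direction for condition 2: one would like to deduce $d(a, x_n) > M$ directly from the definition, but the conclusion requires $a \notin B_{s_0}$, which may fail. The fix is to compare two terms $x_n, x_m$ against each other instead of against $a$, exploiting the fact that the $s_n$ eventually lie outside the finite collection of indices with non-empty $B_s$, and to use the triangle inequality $d(x_n, x_m) \le 2M$ together with the inflated parameter $3M$ to close the argument.
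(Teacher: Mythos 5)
Your proof is correct and follows essentially the same route as the paper: the forward direction argues by contradiction using the bounded sets $B_s$ supplied by Definition \ref{DefCoarseDisjointUnion} (the paper compares two terms at parameter $2M$ where you use $3M$, and your case split on whether $y_n\in B_{t_n}$ tidies up a step the paper states rather tersely), while for the reverse direction the paper only says ``similar'' and your explicit choice $B_s=\{x\in X_s:\exists\, t\ne s,\ y\in X_t,\ d(x,y)\le M\}$ is the natural completion of that sketch. No gaps.
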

\begin{proof}
Suppose $\coprod\limits_{s\in S} X_s$ is a coarse disjoint union in the sense of Definition \ref{DefCoarseDisjointUnion}. Given a sequence $\{x_n\}_{n\ge 1}$ of points in $\coprod\limits_{s\in S} X_s$ belonging to different parts $X_s$ such that $d(a,x_n)$ is not divergent to infinity for some $a\in \coprod\limits_{s\in S} X_s$, we may reduce this case to the one where there is $M > 0$ satisfying
$d(a,x_n) < M$ for all $n\ge 1$. There are bounded subsets $B_s$ of $X_s$, all but finitely many of them empty, such that if $x\in X_s\setminus B_s$ and $y\in X_t\setminus B_t$ for some $s\ne t$, then $d(x,y) > 2M$.
There are $t\ne s$ in $S$ such that $B_t=B_s=\emptyset$ and $x_k\in X_t$, $x_m\in X_s$ for some $k,m$, a contradiction as $d(x_k,x_m) < 2M$.

Given $M > 0$ and a sequence of pairs $(x_n,y_n)$, $n\ge 1$, of points in $\coprod\limits_{s\in S} X_s$ such that $d(x_n,y_n) < M$ for all $n\ge 1$ and $x_n\to\infty$, assume there is no $k\ge 1$ such that for each $n\ge k$ there is an index $s\in S$ so that $x_n,y_n\in X_s$. We may reduce this case to the one where $x_n$ and $y_n$ do not belong to the same $X_s$ for all $n\ge 1$. There are bounded subsets $B_s$ of $X_s$, all but finitely many of them empty, such that if $x\in X_s\setminus B_s$ and $y\in X_t\setminus B_t$ for some $s\ne t$, then $d(x,y) > M$.
There are $t\ne s$ in $S$ such that $B_t=B_s=\emptyset$ and $x_k\in X_t$, $y_k\in X_s$ for some $k$, a contradiction as $d(x_k,y_k) < M$.

The proof in the reverse direction is similar.
\end{proof}

\begin{Corollary}\label{CoarseDisjointUnionOfPoints}
A metric space $(S,d_S)$ is a coarse disjoint union of its points if and only if it is proper (its bounded subsets are finite) and has the property
that $d_S(x_n,y_n) < M <\infty$ for all $n$ and $d_S(x_n,x_1)\to\infty$ imply existence of $k$ such that $x_n=y_n$ for all $n > k$.
\end{Corollary}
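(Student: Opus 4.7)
The plan is to simply specialize the previous proposition to the family $\{X_s\}_{s\in S}$ where each $X_s=\{s\}$ carries the trivial (zero) metric, and translate the three conditions of that proposition into the language of the ambient space $(S,d_S)$. Condition 1 is automatic since each $X_s$ is a single point. The remaining work is to verify that Conditions 2 and 3 of the proposition translate, respectively, into properness and the specific ``pairs eventually coincide'' condition in the statement.

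For Condition 2, a sequence $\{x_n\}$ with each $x_n$ in a different part $X_s$ is exactly a sequence of pairwise distinct points of $S$. So Condition 2 of the proposition asserts that every sequence of pairwise distinct points of $S$ satisfies $d_S(a,x_n)\to\infty$. I would argue this is equivalent to properness as follows: if bounded subsets of $S$ are finite, any injective sequence in $S$ must eventually leave every ball $B(a,R)$ (otherwise some ball would contain infinitely many distinct points); conversely, if some bounded set $B$ were infinite, choosing an injective enumeration of points of $B$ gives a sequence of distinct points which fails to diverge.

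For Condition 3, the hypothesis ``$x_n,y_n\in X_s$ for some $s$'' reduces to ``$x_n=y_n$,'' since every $X_s$ is a singleton. Thus Condition 3 of the proposition reads: if $d_S(x_n,y_n)<M$ for all $n$ and $x_n\to\infty$, then there is $k$ with $x_n=y_n$ for all $n\ge k$. Since $x_n\to\infty$ in a proper space is the same as $d_S(x_n,x_1)\to\infty$, this matches the hypothesis stated in the corollary exactly.

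There is essentially no obstacle beyond the translation; the only point worth stating carefully is the equivalence between ``every injective sequence diverges to infinity'' and properness of $(S,d_S)$, which I would sketch as above. After that, the two directions of the corollary follow immediately from the two directions of the preceding proposition.
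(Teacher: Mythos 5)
Your proposal is correct and follows exactly the route the paper intends: the corollary is stated as an immediate specialization of the preceding proposition to the family of singletons $X_s=\{s\}$, and the paper offers no separate proof precisely because the translation you carry out (Condition 2 $\Leftrightarrow$ properness, Condition 3 $\Leftrightarrow$ the ``pairs eventually coincide'' condition) is the whole content. Your explicit verification that ``every injective sequence diverges'' is equivalent to ``bounded subsets are finite'' is a worthwhile detail the paper leaves implicit.
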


\begin{Proposition}\label{MetricResolutionsVsCoarseDisjointUnions}
Suppose $f:X\to S$ is a surjective function and $X$ is given the metric equal to splicing of metrics on fibers along a section $g:S\to X$. If $S$ is a coarse disjoint union of its points,
then $X$ is a coarse disjoint union of fibers of $f$.
\end{Proposition}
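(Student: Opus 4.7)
The plan is to verify directly the two conditions of Definition \ref{DefCoarseDisjointUnion} for the disjoint union $X=\coprod_{s\in S} f^{-1}(s)$ equipped with the splicing metric $d$. Condition (1), that $d$ restricted to each fiber $f^{-1}(s)$ agrees with $d_s$, is built into clause (1) of Definition \ref{SplicingMetricsViaASection}, so the entire content of the proof lies in producing, for each $M>0$, a family of bounded subsets $B_s\subseteq f^{-1}(s)$ — all but finitely many empty — witnessing condition (2).

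The central tool is the splicing formula: for $x\in f^{-1}(u)$ and $y\in f^{-1}(t)$ with $u\ne t$,
$$d(x,y)=\max\bigl(d_S(u,t),\,d_u(x,g(u)),\,d_t(y,g(t))\bigr),$$
so $d(x,y)>M$ as soon as any one of the three quantities on the right exceeds $M$. The problem therefore decouples into two independent sources of largeness: one on the base $S$, and one on each fiber.

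Since $S$ is itself a coarse disjoint union of its singletons, applying condition (2) of Definition \ref{DefCoarseDisjointUnion} to the value $M$ furnishes a finite subset $F\subseteq S$ such that $d_S(u,t)>M$ for all distinct $u,t\in S\setminus F$. I would then set
$$B_s=\{x\in f^{-1}(s):d_s(x,g(s))\le M\}\text{ for }s\in F,\qquad B_s=\emptyset\text{ for }s\notin F.$$
Each $B_s$ has $d_s$-diameter at most $2M$ by the triangle inequality inside the fiber, so it is bounded in $f^{-1}(s)$; and all but finitely many of them are empty by construction.

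To close the argument I would verify the implication in condition (2) by cases on whether $s,t$ lie in $F$. If $s,t\notin F$, then $d_S(s,t)>M$ gives $d(x,y)>M$ directly from the splicing formula. If $s\in F$, then $x\notin B_s$ means $d_s(x,g(s))>M$, which again forces $d(x,y)>M$; the case $t\in F$ is symmetric. I do not anticipate a serious obstacle — the argument is essentially bookkeeping built around the splicing formula — the only mild subtlety being the realisation that the fiber-level exceptional sets $B_s$ only need to be non-empty above the finite set $F\subseteq S$ already produced at the base level by the coarse disjoint union structure on $S$ (cf.\ Corollary \ref{CoarseDisjointUnionOfPoints}).
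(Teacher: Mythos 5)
Your proof is correct. It differs from the paper's in route, though not in the underlying mechanism: the paper invokes the sequential characterization of coarse disjoint unions (via Corollary \ref{CoarseDisjointUnionOfPoints} and the preceding proposition), arguing by contradiction with sequences $(x_n,y_n)$ and pushing them down to $S$ via the $1$-Lipschitz map $f$, whereas you verify Definition \ref{DefCoarseDisjointUnion} directly by exhibiting explicit exceptional sets $B_s$. Both hinge on the same two facts --- the splicing formula expresses $d(x,y)$ as a maximum of a base term and two fiber terms, and the coarse disjoint union structure on $S$ controls the base term outside a finite set $F$ --- but your version is more constructive and arguably cleaner: it sidesteps the step in the paper's argument where one must justify that $x_n\to\infty$ in $X$ forces $f(x_n)\to\infty$ in $S$ (which is not immediate from the splicing formula alone and requires a small additional case analysis), since your fiber-level sets $\{x: d_s(x,g(s))\le M\}$ over $s\in F$ absorb exactly the points for which the fiber term could fail to be large. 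The case split ($s,t\notin F$ versus at least one of $s,t$ in $F$) is exhaustive and each branch correctly isolates one coordinate of the maximum exceeding $M$, so no gap remains.
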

\begin{proof}
Apply \ref{CoarseDisjointUnionOfPoints}:\\
If $B$ is a bounded subset of $X$, then $f(B)$ is bounded as $f$ is $1$-Lipschitz. Hence $f(B)$ is finite.

If $d(x_n,y_n) < M$, $x_n\to \infty$, and $f(x_n)\ne f(y_n)$ for all $n\ge 1$, then $d_S(f(x_n),f(y_n)) < M$ and $f(x_n)\to\infty$. Hence $f(x_n)=f(y_n)$ for almost all $n$, a contradiction.
\end{proof}

\begin{Corollary}\label{UltrametricAsCoarseDisjointUnion}
Every integral ultrametric space $(X,d)$ is a coarse disjoint union of countably many of its bounded subsets.
\end{Corollary}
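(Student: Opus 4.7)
The plan is to produce a metric resolution $i: X \to S$ with countably many bounded fibers, where $(S, d_S)$ is itself a coarse disjoint union of its points, and then invoke Proposition \ref{MetricResolutionsVsCoarseDisjointUnions} to transfer the conclusion back to $X$.

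First, I would fix a basepoint $x_0 \in X$ and apply Proposition \ref{UltrametricResolution}: set $S := \{d(x_0, x) : x \in X\} \subseteq \NN$ with the ultrametric $d_S(u, t) = \max(u, t)$, so that $i(x) := d(x_0, x)$ is a metric resolution. The ultrametric triangle inequality in $X$ forces each fiber $i^{-1}(n)$ to have diameter at most $n$, so every fiber is bounded, and since $S \subseteq \NN$ there are at most countably many nonempty fibers. Moreover, for any section $g: S \to X$ and any $x \in i^{-1}(u)$, $y \in i^{-1}(t)$ with $u \ne t$, we have $d(x, g(u)) \le u \le \max(u,t) = d_S(u,t)$ and similarly $d(y, g(t)) \le t \le d_S(u,t)$, so the given metric on $X$ agrees with the splicing of fiber metrics along $g$; this places us in the exact setting of Proposition \ref{MetricResolutionsVsCoarseDisjointUnions}.

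The remaining step is to verify, via Corollary \ref{CoarseDisjointUnionOfPoints}, that $(S, d_S)$ is itself a coarse disjoint union of its points. Bounded subsets of $S$ under $d_S(u,t) = \max(u,t)$ are exactly the bounded subsets of $\NN$, hence finite, so $S$ is proper. For the sequence condition, observe that $d_S(x_n, y_n) = \max(x_n, y_n) < M$ forces $x_n < M$, while $d_S(x_n, x_1) = \max(x_n, x_1) \to \infty$ forces $x_n \to \infty$, so the hypothesis of the implication cannot be satisfied and the implication holds vacuously. With both ingredients in hand, Proposition \ref{MetricResolutionsVsCoarseDisjointUnions} yields that $X$ is a coarse disjoint union of the fibers $i^{-1}(n)$, a countable family of bounded subsets of $X$. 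I do not anticipate any substantive obstacle; the only bookkeeping point is confirming that the metric-resolution metric really is a splicing, which is precisely where the bound $\diam(i^{-1}(n)) \le n$ is used.
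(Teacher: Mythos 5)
Your proposal is correct and follows essentially the same route as the paper: basepoint, the resolution $i(x)=d(x_0,x)$ onto $S$ with the max ultrametric via Proposition \ref{UltrametricResolution}, the observation that $S$ is a coarse disjoint union of its points, and then Proposition \ref{MetricResolutionsVsCoarseDisjointUnions}. Your explicit verification that the resolution metric coincides with the splicing metric (using $\operatorname{diam}(i^{-1}(n))\le n$) fills in a step the paper leaves implicit, but it is the same argument.
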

\begin{proof}
Pick $x_0\in X$ and let $S$ be the set of all possible distances $d(x_0,x)$, $x\in X$. The metric $d_S$ of $S$ is defined by $d_S(u,t)=\max(u,t)$ if $u\ne t$. By \ref{UltrametricResolution} the function $f:X\to S$ given by $f(x)=d(x_0,x)$ is a metric resolution. Notice $S$ is a coarse disjoint union of its points and apply \ref{MetricResolutionsVsCoarseDisjointUnions}.
\end{proof}

\begin{Corollary}
If $S$ is countable, then any family $\{(X_s,d_s)\}_{s\in S}$ of non-empty metric spaces has a coarse disjoint union. Moreover, if each $d_s$ is an (integral) ultrametric, then there is a coarse disjoint union equipped with an (integral) ultrametric.
\end{Corollary}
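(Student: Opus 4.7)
The plan is to reduce everything to the two preceding sections by equipping $S$ with a suitable metric and then building the total metric on $\coprod_{s\in S}X_s$ by splicing. Enumerate $S=\{s_1,s_2,\ldots\}$ and define $d_S(s_n,s_m)=\max(n,m)$ for $n\ne m$ (and $0$ on the diagonal). The ultrametric triangle inequality follows from $\max(n,k)\le\max(n,m,k)$, so $d_S$ is an integer-valued ultrametric; moreover $(S,d_S)$ is proper, since $B(s_n,R)\subseteq\{s_m:m\le R\}\cup\{s_n\}$ is finite. Finally, if $d_S(x_n,y_n)<M$ and $x_n\to\infty$, writing $x_n=s_{a_n}$ and $y_n=s_{b_n}$ gives $a_n\to\infty$, and once $a_n\ge M$ the condition $\max(a_n,b_n)<M$ forces $a_n=b_n$. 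By Corollary \ref{CoarseDisjointUnionOfPoints}, $(S,d_S)$ is a coarse disjoint union of its points.

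Next, pick any section $g:S\to\coprod_{s\in S}X_s$ (a basepoint in each $X_s$) and let $d$ be the splicing of $\{d_s\}_{s\in S}$ along $g$ in the sense of Definition \ref{SplicingMetricsViaASection}. The splicing proposition shows that $d$ is a metric and that it restricts to $d_s$ on each $X_s$. Applying Proposition \ref{MetricResolutionsVsCoarseDisjointUnions} to the canonical projection $f:\coprod_{s\in S}X_s\to S$ then immediately yields that $(\coprod_{s\in S}X_s,d)$ is a coarse disjoint union of the family $\{(X_s,d_s)\}_{s\in S}$, proving the main statement.

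For the (integral) ultrametric refinement, I would observe that $d_S$ is itself an integral ultrametric by construction, and that the splicing formula $d(x,y)=\max(d_S(u,t),d_u(x,g(u)),d_t(y,g(t)))$ is a maximum: hence, by the splicing proposition, $d$ is an ultrametric whenever $d_S$ and every $d_s$ are ultrametrics, and moreover $d$ takes integer values whenever $d_S$ and every $d_s$ do. Thus the same construction covers all three cases uniformly.

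The only substantive point in the argument is the choice of $d_S$, which must simultaneously be proper, have distances that diverge along any sequence of distinct points, and (for the refined version) be an integer-valued ultrametric; the single formula $\max(n,m)$ achieves all three at once, and no further obstacle remains.
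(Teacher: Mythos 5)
Your proof is correct and follows essentially the same route as the paper: equip $S$ (identified with $\mathbb{N}$) with the $\max$ ultrametric, check via Corollary \ref{CoarseDisjointUnionOfPoints} that it is a coarse disjoint union of its points, splice the fiber metrics along a section, and apply Proposition \ref{MetricResolutionsVsCoarseDisjointUnions}. Your treatment is in fact slightly more explicit than the paper's, spelling out the verification of properness and divergence for $d_S$ and the integrality of the spliced metric.
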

\begin{proof}
We may assume $S=\mathbb{N}$ and we equip $S$ with the $\max$ ultrametric $d(m,n)=\max(m,n)$ if $m\ne n$. Let $X$ be a disjoint union of all $X_s$, $s\in S$, equipped with a splicing metric for some section $g:S\to X$.
By \ref{CoarseDisjointUnionOfPoints} and \ref{MetricResolutionsVsCoarseDisjointUnions}, $X$ is a coarse disjoint union of $\{(X_s,d_s)\}_{s\in S}$.
\end{proof}

\begin{Proposition}\label{TwoDifferentCoarseUnionsProp}
Given two coarse disjoint unions $(\coprod\limits_{s\in S} X_s,d)$ and $(\coprod\limits_{s\in S} Y_s,\rho)$ \\
1. isometric embeddings $i_s:X_s\to Y_s$, $s\in S$, induce a coarse embedding $i$ from $(\coprod\limits_{s\in S} X_s,d)$ to $(\coprod\limits_{s\in S} Y_s,\rho)$,\\
2. identity functions $i_s:X_s\to Y_s$, $s\in S$, induce a coarse equivalence $i$ from $(\coprod\limits_{s\in S} X_s,d)$ to $(\coprod\limits_{s\in S} Y_s,\rho)$.
\end{Proposition}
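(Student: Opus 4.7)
The plan is to verify that $i$ is uniformly bornologous and uniformly proper using the sequential characterization of coarse disjoint unions from the preceding Proposition; part 2 will then follow immediately, since the identities are surjective isometries.

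For uniform bornologousness in part 1, suppose $d(x_n,y_n)\le R$. If infinitely many pairs lie in a common fiber $X_{s_n}$, the isometry of $i_{s_n}$ gives $\rho(i(x_n),i(y_n))=d_{s_n}(x_n,y_n)\le R$ along that subsequence. On the complementary subsequence, $x_n\in X_{s_n}$, $y_n\in X_{t_n}$ with $s_n\ne t_n$; if any sub-subsequence of $(x_n)$ tended to infinity, condition 3 on $X$ would force $s_n=t_n$ eventually, a contradiction. Hence $(x_n)$ and $(y_n)$ are bounded in $X$ and, by the Observation, lie in $\coprod_{s\in F}X_s$ for some finite $F\subseteq S$. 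Since each $i_s$ is isometric, the images lie in a finite union of bounded sets in $Y$, which is bounded, so $\rho(i(x_n),i(y_n))$ is bounded.

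For uniform properness, suppose $\rho(i(x_n),i(y_n))\le R$. If $(i(x_n))$ is bounded in $Y$, then so is $(i(y_n))$, and they both lie in $\coprod_{s\in F'}Y_s$ for some finite $F'$. The isometric embeddings $i_s$ have bounded preimages on each fiber, so $(x_n),(y_n)$ lie in a bounded subset of $X$ and $d(x_n,y_n)$ is bounded. Otherwise, passing to a subsequence with $i(x_n)\to\infty$ in $Y$, condition 3 on $Y$ forces $i(x_n)$ and $i(y_n)$ into a common fiber $Y_{s_n}$ eventually; then $x_n,y_n\in X_{s_n}$ and the fiber isometry gives $d(x_n,y_n)=\rho(i(x_n),i(y_n))\le R$. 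In either case $d(x_n,y_n)$ is bounded, which is the sequential form of uniform properness.

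Part 2 follows at once from part 1: an identity $i_s\colon X_s\to Y_s$ is an isometric embedding, so $i$ is a coarse embedding, and since each $i_s$ is surjective, $i$ itself is surjective and therefore coarsely surjective, making it a coarse equivalence. The main subtlety throughout is the cross-fiber case, where one must combine the fiberwise isometry with the finite-fiber-support of bounded sets and the eventual common-fiber conclusion of condition 3 to transfer boundedness between $X$ and $Y$; once those two tools are in hand, the verification is routine case analysis.
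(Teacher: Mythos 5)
Your proof is correct and follows essentially the same route as the paper's: the sequential characterization of coarse disjoint unions, condition 3 to force pairs into a common fiber eventually, and the fact that bounded sets meet only finitely many fibers. You are in fact more thorough than the paper, whose printed proof only verifies large scale continuity and leaves uniform properness implicit; the one point worth tightening is your properness case split, since a sequence with $i(x_n)$ unbounded need not tend to infinity --- run that step by contradiction on a subsequence along which $d(x_n,y_n)\to\infty$ so that the dichotomy ``$i(x_n)$ has a bounded subsequence or $i(x_n)\to\infty$'' becomes exhaustive.
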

\begin{proof} Notice 1) implies 2), so only 1) needs to be proved.
Suppose, on the contrary, that there is $M > 0$ and a sequence of pairs $(x_n,y_n)$, $n\ge 1$, of points in $\coprod\limits_{s\in S} X_s$ such that $d(x_n,y_n) < M$ for all $n\ge 1$ but
$\rho(i(x_n),i(y_n))\to\infty$. There is $k\ge 1$ such that for each $n\ge k$ there is an index $s\in S$ so that $i(x_n),i(y_n)\in Y_s$. Therefore $\rho(i(x_n),i(y_n))=d(x_n,y_n)$ for all $n > k$, a contradiction.
\end{proof}

\begin{Proposition}\label{SeparabilityPropernessOfUnions}
Suppose $S$ is countable and $\{(X_s,d_s)\}_{s\in S}$ is a family of metric spaces. A coarse disjoint union of $\{(X_s,d_s)\}_{s\in S}$ is separable (proper) if and only if each $X_s$ is separable (proper).
\end{Proposition}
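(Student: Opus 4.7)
My plan is to handle each equivalence separately, proving the forward and backward directions of both. The two structural inputs needed are that $(X_s,d_s)\hookrightarrow(X,d)$ is an isometric embedding (condition 1 of Definition~\ref{DefCoarseDisjointUnion}) and that every bounded subset of $X$ lies in $\coprod_{s\in F}X_s$ for some finite $F\subseteq S$ (condition 2a of the Observation following Definition~\ref{DefCoarseDisjointUnion}).

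Separability is routine. For the forward direction, each $X_s$ is a subspace of the separable $X$ via an isometry, so $X_s$ is separable. For the backward direction, take a countable dense $D_s\subseteq X_s$ for each $s\in S$ and set $D:=\bigcup_{s\in S}D_s$. Countability of $S$ makes $D$ countable, and density in $X$ follows because any $x\in X$ lies in some $X_{s_0}$, where it is a $d_{s_0}$-limit of points from $D_{s_0}$; since $d$ restricts to $d_{s_0}$ on $X_{s_0}$, this is also a $d$-limit.

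For properness, the substantive step is the backward direction. Let $B\subseteq X$ be bounded. By the finite-support observation, $B=\bigsqcup_{s\in F}(B\cap X_s)$ for a finite $F\subseteq S$. Each $B\cap X_s$ is $d_s$-bounded, so by hypothesis has compact closure $K_s$ in $X_s$. The isometric inclusion carries each $K_s$ to a compact subset of $X$, so the finite union $K:=\bigcup_{s\in F}K_s$ is compact (and hence closed) in $X$; from $B\subseteq K$ one gets $\overline{B}^{X}\subseteq K$, so $\overline{B}^{X}$ is compact.

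The forward direction of properness reduces, via the subspace identity $\overline{B}^{X_s}=\overline{B}^{X}\cap X_s$, to showing that $X_s$ is closed in $X$; this is where I expect the main obstacle to lie. I would prove closedness by contradiction: if $y_n\in X_s$ converges in $X$ to $y\in X_t$ with $t\ne s$, then applying condition 2 of Definition~\ref{DefCoarseDisjointUnion} at progressively smaller values of $M$ forces each tail of $(y_n)$ into a bounded set $B_s^{(M)}\subseteq X_s$; the bookkeeping of which alternative (``$y_n\in B_s^{(M)}$'' versus ``$y\in B_t^{(M)}$'') occurs for each $M$, combined with properness of $X$ to extract a convergent subsequence inside the compact $\overline{B_s^{(M)}}^{X}$, should yield the required contradiction. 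An alternative framing, relevant to the applications of this proposition in the paper, is to restrict attention to coarse disjoint unions built from splicings (Definition~\ref{SplicingMetricsViaASection}): there the across-fiber estimate $d(x,y)\ge d_u(x,g(u))$ makes closedness of each $X_u$ in $X$ immediate, so the forward direction of properness goes through with no further work.
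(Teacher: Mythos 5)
Your separability argument and your backward implication for properness are correct and complete; the latter is essentially the paper's entire (one-sentence) proof, namely that a bounded subset of the union is a finite union of bounded subsets of the $X_s$. The genuine problem is the forward implication for properness, exactly where you expected it: the closedness of each $X_s$ in $X$ cannot be extracted from Condition 2 of Definition \ref{DefCoarseDisjointUnion}, because it is false for general coarse disjoint unions. Take $S=\{1,2\}$, let $X_1=\{a\}$ be a point, let $X_2=\{b_n\}_{n\ge 1}$ carry the metric $d_2(b_n,b_m)=|1/n-1/m|$, and set $d(a,b_n)=1/n$. One checks this $d$ is a metric, and Condition 2 holds for every $M$ with $B_1=X_1$ and $B_2=\emptyset$, since then $X_1\setminus B_1=\emptyset$ and the requirement is vacuous; so $X=X_1\sqcup X_2$ is a coarse disjoint union. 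But $X$ is isometric to $\{0\}\cup\{1/n\colon n\ge 1\}\subset\RR$, hence compact and proper, while $X_2$ is bounded and not complete, hence not proper, and $b_n\to a$ shows $X_2$ is not closed in $X$. Thus under the reading of proper as ``bounded subsets have compact closure'' (the gloss in Theorem \ref{countableintegral}), the forward implication of the Proposition is itself false, and no bookkeeping with the sets $B_s^{(M)}$ can rescue your contradiction: in this example the alternative ``$y\in B_t^{(M)}$'' is the one that always occurs, and it constrains nothing.

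The statement survives under either of two repairs, both consistent with how the Proposition is actually used. If ``proper'' is read as ``bounded subsets are finite'' (the gloss in Corollary \ref{CoarseDisjointUnionOfPoints}, and equivalent to the compact-closure reading for integral ultrametric spaces, which are uniformly discrete), the forward direction is immediate and needs no closedness. Alternatively, for coarse disjoint unions arising from splicings or metric resolutions --- the only ones constructed in the paper --- distinct fibers $X_u$ and $X_t$ are at distance at least $d_S(u,t)>0$, so each fiber is closed and your reduction via $\overline{B}^{X_s}=\overline{B}^{X}\cap X_s$ goes through, as you note. Since only the backward implication is invoked later (to prove $PU$ is proper in Theorem \ref{ProperDimZeroCase}), the part of your proof that is complete is the part that matters; the paper's own proof silently ignores the closure issue altogether.
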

\begin{proof}
It follows from the fact any bounded subset $B$ of the coarse disjoint union is a union of finitely many bounded subsets of some among spaces $X_s$.
\end{proof}

\section{Special ultrametric spaces}\label{SpecialUltrametricSpaces}

In this section we construct universal spaces (with respect to isometric embeddings) for specific classes of bounded ultrametric spaces. More specifically, for a finite subset $D\subseteq\mathbb{N}$ that contains $0$ we construct universal spaces for the class of countable $D$-ultrametric spaces, and for each $m\geq 1$ we construct an universal space for the class of $D$-ultrametric spaces with at most $m$ points. The spaces constructed in this section serve as the building blocks for the universal spaces constructed in section \ref{universal spaces}.

\begin{Proposition}\label{IsometricembeddingsAndResolutions}
Suppose $p_i:E_i\to B_i$, $i=1,2$, are metric resolutions, $f:B_1\to B_2$ is an isometric embedding and for each $s\in B_1$ there is an isometric embedding $f_s:p_1^{-1}(s)\to p_2^{-1}(f(s))$. There is an isometric embedding $F:E_1\to E_2$ such that $p_2\circ F=f\circ p_1$.
\end{Proposition}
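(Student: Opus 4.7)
The plan is to define $F$ fiberwise by the only possible formula, namely
\[
F(x) := f_{p_1(x)}(x).
\]
This is well-defined because $x \in p_1^{-1}(p_1(x))$, and $f_{p_1(x)}$ maps this fiber into $p_2^{-1}(f(p_1(x)))$. Consequently $p_2(F(x)) = f(p_1(x))$, which is exactly the commutativity $p_2 \circ F = f \circ p_1$.

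To check that $F$ is an isometric embedding, fix $x, y \in E_1$ and split into two cases depending on whether they lie in the same fiber of $p_1$. In the case $p_1(x) = p_1(y) = s$, both points lie in the fiber $p_1^{-1}(s)$ with its inherited metric, and $F(x), F(y)$ both lie in $p_2^{-1}(f(s))$; since $f_s$ is by hypothesis an isometric embedding of these fibers (with the metrics inherited from $E_1, E_2$), we get $d_{E_2}(F(x), F(y)) = d_{E_1}(x, y)$.

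In the case $p_1(x) \ne p_1(y)$, the metric resolution property applied to $p_1$ gives $d_{E_1}(x, y) = d_{B_1}(p_1(x), p_1(y))$. Since $f$ is an isometric embedding it is injective, so $f(p_1(x)) \ne f(p_1(y))$, i.e.\ $p_2(F(x)) \ne p_2(F(y))$. Then the resolution property applied to $p_2$ yields
\[
d_{E_2}(F(x), F(y)) = d_{B_2}(p_2(F(x)), p_2(F(y))) = d_{B_2}(f(p_1(x)), f(p_1(y))) = d_{B_1}(p_1(x), p_1(y)) = d_{E_1}(x, y),
\]
using that $f$ is isometric on the second-to-last step.

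There is no real obstacle: the statement is essentially an assembly lemma and the proof is forced by the definitions. The only mildly subtle point is the case $p_1(x) = p_1(y)$, where one must be careful that the ``fiber metrics'' with respect to which $f_s$ is an isometric embedding coincide with the restrictions of $d_{E_1}$ and $d_{E_2}$ to the corresponding fibers; this is an implicit convention in the notion of metric resolution (fibers carry the subspace metric), so no separate argument is needed.
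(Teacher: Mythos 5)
Your proof is correct and is essentially the paper's own argument: the paper defines $F$ as the union of all $f_s$ (i.e.\ $F(x)=f_{p_1(x)}(x)$) and declares it "clearly" an isometric embedding, while you have simply spelled out the two-case verification (same fiber via the hypothesis on $f_s$, different fibers via the resolution property and injectivity of $f$) that the paper leaves implicit.
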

\begin{proof}
$F$ is the union of all $f_s$, $s\in B_1$. Clearly, it is an isometric embedding.
\end{proof}

The following can be seen as an inductive step in splitting bounded ultrametric spaces into Lego pieces.
\begin{Proposition}\label{BasicSplitOfUltrametrics}
Suppose $D$ is a finite set of non-negative integers and $k=\max(D) > 0$.
Every $D$-ultrametric space $X$ of diameter $k$ admits a surjective metric resolution
$f:X\to \{0,k\}\subset \mathbb{N}$ such that $\diam(f^{-1}(0)) < k$.
\end{Proposition}
\begin{proof}
Pick two points $a,b\in X$ at distance $k$. $f$ maps all points $x\in X$ such that $d(x,a) < k$ to $0$ and $f$ maps all points $x\in X$ such that $d(a,x)=k$ to $k$.
Notice $\diam(f^{-1}(0)) < k$, $\diam(f^{-1}(k)) \leq k$, so $f$ is indeed a metric resolution.
\end{proof}

\begin{Proposition}\label{boundedfiniteuniversal}
Given a finite set $D$ of non-negative integers and given $m\ge 1$ there is a finite $D$-ultrametric space $FU(m,D)$ such that any $D$-ultrametric space $X$ containing at most $m$ points isometrically embeds in $FU(m,D)$.
\end{Proposition}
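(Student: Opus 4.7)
The plan is to proceed by induction on $|D|$ (equivalently, on $\max(D)$), building $FU(m,D)$ by iteratively attaching copies of smaller universal spaces to the vertices of a scaled full simplex via the metric resolution construction from Proposition \ref{ExistenceOfUltraMetricResolutions}. The key observation is that Proposition \ref{FiniteUltrametricResolution} already tells us that every $D$-ultrametric space decomposes canonically as a metric resolution over a scaled full simplex, so universality will reduce to a universality statement on the fibers at one strictly lower value of $\max(D)$.

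For the base case $D=\{0\}$, every $D$-ultrametric space has at most one point, so a single point is the desired $FU(m,D)$. For the inductive step, set $M:=\max(D)$ and $D':=D\setminus\{M\}$, and assume by inductive hypothesis that $FU(m,D')$ has been constructed. Take a full simplex $C$ on $m$ vertices $v_1,\ldots,v_m$ equipped with the scaled metric in which distinct vertices are at distance $M$; this is itself a $\{0,M\}$-ultrametric space. At each vertex $v_j$ attach a disjoint copy $Y_j$ of $FU(m,D')$ with its given ultrametric. Since $\max(D')<M$, the diameter of each $Y_j$ is strictly less than $M=d_C(v_i,v_j)$ for $i\ne j$, so Proposition \ref{ExistenceOfUltraMetricResolutions} applies (in the ultrametric case) and yields a unique ultrametric on $FU(m,D):=\coprod_{j=1}^m Y_j$ making the obvious projection onto $M\cdot C$ a metric resolution. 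By construction every distance in $FU(m,D)$ lies in $D'\cup\{M\}=D$, so $FU(m,D)$ is a $D$-ultrametric space, and it is finite since it is a finite disjoint union of finite spaces.

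To verify universality, let $X$ be any $D$-ultrametric space with $|X|\le m$. Apply Proposition \ref{FiniteUltrametricResolution} to obtain a metric resolution $i_X:X\to M\cdot C_X$ for some full simplex $C_X$ with at most $m$ vertices, where each fiber $i_X^{-1}(c)$ is a $D'$-ultrametric space with at most $m$ points. Since $C_X$ has at most $m$ vertices, there is an isometric embedding $f:M\cdot C_X\to M\cdot C$ of scaled full simplices (just any injection of vertex sets), and by the inductive hypothesis each fiber $i_X^{-1}(c)$ admits an isometric embedding into $FU(m,D')$, hence into the fiber $Y_{f(c)}$ of the metric resolution on $FU(m,D)$ over the vertex $f(c)$. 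Proposition \ref{IsometricembeddingsAndResolutions} then assembles these fiberwise isometric embeddings with $f$ into an isometric embedding $F:X\to FU(m,D)$, completing the induction.

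There is no real obstacle here; the only point requiring a small amount of care is checking the hypothesis of Proposition \ref{ExistenceOfUltraMetricResolutions} for the ultrametric case, namely that $\operatorname{diam}(Y_j)\le d_C(v_i,v_j)=M$ for $i\ne j$, which is automatic since distances in $Y_j\cong FU(m,D')$ lie in $D'\subseteq[0,M-1]$. Everything else reduces to routine assembly via the two propositions cited above.
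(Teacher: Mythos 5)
Your proof is correct, and while its overall architecture (induction on $|D|$, building $FU(m,D)$ as the total space of a metric resolution via Proposition \ref{ExistenceOfUltraMetricResolutions}, and assembling the embedding fiberwise via Proposition \ref{IsometricembeddingsAndResolutions}) matches the paper's, the decomposition you resolve over is genuinely different. The paper resolves $X$ over the set of distances from a chosen base point $x_0$ equipped with the max ultrametric (Proposition \ref{UltrametricResolution}), so its fibers are spheres about $x_0$ and the induction simultaneously decrements $m$: $FU(m,C)$ is built with fiber $FU(m-1,D\setminus\{k\})$ over $k=\max(C)$ and $FU(m-1,D\cap[0,i])$ over each $i<k$. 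You instead resolve over the quotient by the relation $d(x,y)<M$, $M=\max(D)$ (Proposition \ref{FiniteUltrametricResolution}), so your fibers are the $<M$-components, which genuinely are $D'$-ultrametric for $D'=D\setminus\{M\}$, and $m$ stays fixed throughout the induction. Your bookkeeping is, if anything, the more robust of the two: the sphere of radius $k=\max(D)$ about $x_0$ can contain two points at mutual distance $k$, so it need not be a $D\setminus\{k\}$-ultrametric space, whereas your equivalence classes have all distances strictly below $M$ by construction. The price is a larger universal space (on the order of $m^{|D|}$ points rather than the paper's more economical count), which is irrelevant to the statement. Your side remark about Proposition \ref{ExistenceOfUltraMetricResolutions} is also the right reading: in the ultrametric case only the bound $\diam(Y_j)\le d_S(u,t)$ is needed, which is exactly how the paper itself applies that proposition.
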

\begin{proof}
We will construct $FU(m,D)$ by induction on $m$.
Let $FU(1,D)$ be a one-point metric space for any $D$. 
Given $D$ with $k=\max(D) > 0$, $FU(m,D)$ is the disjoint union
of $FU(m-1,D\setminus \{k\})$ and $FU(m-1,D)$ so that any distance between points in different parts is $k$. Thus, as in \ref{BasicSplitOfUltrametrics}, $FU(m,D)$ admits a metric resolution
onto $\{0,k\}$ with fibers $FU(m-1,D\setminus \{k\})$ and $FU(m-1,D)$.

Now, by induction on $m$, we can show the space $FU(m,D)$ has the needed property. Indeed, it is so for $m=1$. Once we assume
the spaces $FU(n,C)$, $n < m$ and $C$ a finite set of non-negative integers, have the needed property, we use
\ref{BasicSplitOfUltrametrics} jointly with \ref{IsometricembeddingsAndResolutions} to conclude $FU(m,D)$ has the property 
that any $D$-ultrametric space $X$ containing at most $m$ points isometrically embeds in $FU(m,D)$.
\end{proof}

\begin{Proposition}\label{boundedcountableuniversal}
Given a finite set $D$ of non-negative integers there is a countable $D$-ultrametric space $CU(D)$ such that any countable $D$-ultrametric space $X$ isometrically embeds in $CU(D)$.
\end{Proposition}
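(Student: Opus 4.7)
The plan is to mimic the inductive construction in \ref{boundedfiniteuniversal}, inducting on $|D|$, but with the finite base simplex replaced by a countably infinite one (to accommodate arbitrarily large countable sets of equivalence classes) and the finite fiber $FU(m{-}1,\cdot)$ replaced by the countable fiber $CU(\cdot)$ furnished by the inductive hypothesis.

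For the base case $D=\{0\}$, let $CU(\{0\})$ be a one-point space. For the inductive step, let $k=\max(D)$ and $D':=D\cap[0,k-1]$, so $|D'|=|D|-1$, and assume $CU(D')$ has been constructed. Fix a countably infinite set $S$ with metric $d_S(u,t)=k$ for $u\ne t$; thus $S$ is $k$ times a countably infinite full simplex. Define $CU(D)$ to be the disjoint union of one copy of $CU(D')$ for each $s\in S$. By \ref{ExistenceOfUltraMetricResolutions} there is a unique ultrametric on $CU(D)$ making the projection $f\colon CU(D)\to S$ a metric resolution, since each fiber has diameter at most $k-1\le k=d_S(u,t)$ for $u\ne t$. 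The resulting space $CU(D)$ is countable, and all of its distances lie in $D'\cup\{k\}=D$.

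To establish universality, let $X$ be a countable $D$-ultrametric space. By \ref{FiniteUltrametricResolution}, the quotient $i\colon X\to k\cdot C_X$ by the equivalence relation declaring $x\sim y$ when $d(x,y)<k$ is a metric resolution, with $C_X$ at most countable. Any injection $\iota\colon C_X\hookrightarrow S$ is automatically an isometric embedding of $k\cdot C_X$ into $S$, as both base spaces have $d(u,t)=k$ for all $u\ne t$. Each fiber $i^{-1}(c)$ is a countable $D'$-ultrametric space (distances strictly less than $k$ lie in $D\cap[0,k-1]=D'$), so by the inductive hypothesis it embeds isometrically into $CU(D')=f^{-1}(\iota(c))$. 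Combining $\iota$ with the resulting fiberwise isometric embeddings via \ref{IsometricembeddingsAndResolutions} yields the desired isometric embedding $X\hookrightarrow CU(D)$.

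The proof is thus essentially a transcription of the finite case \ref{boundedfiniteuniversal} with ``at most $m$ points'' replaced throughout by ``countable''. The only obstacle is organizational: one must take $S$ countably infinite rather than finite in order to receive any possible $C_X$, and one must verify the diameter hypothesis of \ref{ExistenceOfUltraMetricResolutions}, which holds automatically since $\max(D')\le k-1<k$.
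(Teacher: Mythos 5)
Your proof is correct, but it is not the route the paper takes, and the difference is substantive. The paper's own proof builds $CU(G)$, $k=\max(G)$, as a metric resolution over the \emph{finite} base $G$ itself with the max ultrametric (in the spirit of Proposition \ref{UltrametricResolution}, i.e.\ indexing points by their distance to a basepoint), setting $f^{-1}(k)=CU(G\setminus\{k\})$ and $f^{-1}(i)=CU(G\cap[0,i])$ for $i<k$. You instead resolve over a countably infinite full simplex scaled by $k$, with every fiber a copy of $CU(D\cap[0,k-1])$, following the decomposition of Proposition \ref{FiniteUltrametricResolution} (quotient by $d(x,y)<k$). Your choice of base is the one that actually works: in the paper's space every fiber has diameter strictly less than $k$ and two points in fibers over $i\ne j$ are at distance $\max(i,j)$, so a set of points pairwise at distance $k$ has at most two elements (at most one inside $f^{-1}(k)$ and at most one outside it); consequently the countable $\{0,k\}$-ultrametric space with all nonzero distances equal to $k$ cannot embed in the paper's $CU(\{0,k\})$, which as defined has exactly two points. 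The sphere of radius $k$ about a basepoint is in general a full $D$-ultrametric space, not a $D\setminus\{k\}$-one, which is exactly the difficulty your construction absorbs by taking an infinite equilateral base and pushing each whole class $\{d<k\}$ into a single fiber. Your verification via \ref{ExistenceOfUltraMetricResolutions}, \ref{FiniteUltrametricResolution} and \ref{IsometricembeddingsAndResolutions} is complete, so your argument is best viewed as a corrected version of the paper's intended induction on $|D|$.
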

\begin{proof}
Let $CU(\{0\})$ be a one-point metric space. Suppose spaces $CU(D)$ are known for all $D$ containing at most $n$ integers and $G$ contains $(n+1)$ integers with $k=\max(G)$. 
Define $CU(G)$ as the total space in a metric resolution $f:CU(G)\to k\cdot \Delta$, where $k\cdot\Delta$ is an infinite countable space with all non-zero distances equal $k$ (i.e. $k$ times the infinite simplex), so that $f^{-1}(x)$
is a copy of $CU(G\setminus\{k\})$ for each $x\in k\cdot\Delta$.
By \ref{ExistenceOfMetricResolutions} and
\ref{ExistenceOfUltraMetricResolutions} such a resolution exists and $CU(G)$ is an ultrametric space.

Given a countable $G$-ultrametric space $X$ if $\diam(X) < k$, $X$ clearly embeds into $CU(G)$ isometrically due to the inductive step,
so assume $\diam(X)=k$ and choose a maximal subset $S$ of $X$ such that all non-zero distances in $S$ are equal $k$. For each $s\in S$ consider
the set $X_s$ of all $x\in X$ such that $d(s,x) < k$. Notice the following:\\
a. $\diam(X_s) < k$
for each $s\in S$, \\
b. if $x\in X_s$, $y\in X_t$ and $s\ne t$, then $d(x,y)=k$, \\
c. $X=\bigcup\limits_{s\in S}X_s$. \\
Choose an injective function $g:S\to k\cdot\Delta$ and choose isometric embeddings $f_s:X_s\to f^{-1}(g(s))$ for each $s\in S$.
Apply \ref{IsometricembeddingsAndResolutions}.
\end{proof}

\begin{Corollary}\label{UniversalForCountableUltrametric}
There is a countable integral ultrametric space $CU$ such that any countable integral ultrametric space $X$ isometrically embeds in $CU$.
\end{Corollary}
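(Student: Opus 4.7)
The plan is to combine the bounded universals $CU(\{0,1,\ldots,n\})$ from Proposition \ref{boundedcountableuniversal} into a single resolution over $\mathbb{N}$. Equip $\mathbb{N}$ with the max ultrametric $d_{\mathbb{N}}(u,t)=\max(u,t)$ for $u\ne t$, and designate the fiber over each $n\in\mathbb{N}$ to be $X_n := CU(\{0,1,\ldots,n\})$.

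First I would verify the diameter hypothesis of the ultrametric case of Proposition \ref{ExistenceOfUltraMetricResolutions}: each $X_n$ is a $\{0,1,\ldots,n\}$-ultrametric space and hence has diameter at most $n$, while for $u\ne n$ one has $d_{\mathbb{N}}(u,n)=\max(u,n)\ge n$ (with $X_0$ a singleton of diameter $0$). Thus Proposition \ref{ExistenceOfUltraMetricResolutions} supplies a unique integral ultrametric $d$ on the disjoint union making the indexing function $f:CU\to\mathbb{N}$ a metric resolution, and $CU$ is countable as a countable union of countable fibers.

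For universality, I would take an arbitrary countable integral ultrametric space $X$, fix a basepoint $x_0\in X$, and let $S_X=\{d(x_0,x)\colon x\in X\}\subseteq\mathbb{N}$. By Proposition \ref{UltrametricResolution} the map $i_X:X\to S_X$, $x\mapsto d(x_0,x)$, is a metric resolution when $S_X$ carries the max ultrametric, so the inclusion $S_X\hookrightarrow\mathbb{N}$ is an isometric embedding of bases. By the ultrametric triangle inequality each fiber $i_X^{-1}(n)$ has diameter at most $n$, hence is a countable $\{0,1,\ldots,n\}$-ultrametric space; Proposition \ref{boundedcountableuniversal} then embeds it isometrically into $X_n=f^{-1}(n)$. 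Proposition \ref{IsometricembeddingsAndResolutions} finally assembles these fiberwise embeddings and the base embedding into an isometric embedding $X\hookrightarrow CU$.

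The only delicate point is the matching between the diameter bound $\diam(X_n)\le n$ and the base spacing $\min_{u\ne n}d_{\mathbb{N}}(u,n)=n$, which is exactly what forces the choice of fiber indexing $X_n=CU(\{0,\ldots,n\})$; once that match is observed, every remaining step is an application of a previously established proposition.
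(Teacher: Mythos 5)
Your proposal is correct and follows essentially the same route as the paper: the paper defines $CU$ as the total space of a metric resolution $f:CU\to\mathbb{N}$ with $f^{-1}(i)=CU(\mathbb{N}\cap[0,i])$ and cites Propositions \ref{ExistenceOfUltraMetricResolutions}, \ref{IsometricembeddingsAndResolutions}, and \ref{UltrametricResolution}, exactly the three ingredients you use. Your write-up merely makes explicit the diameter check and the basepoint/fiberwise embedding argument that the paper leaves implicit.
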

\begin{proof}
Consider $\mathbb{N}$ equipped with the ultrametric $d(k,m)=\max(k,m)$
for $k\ne m$
and use spaces $ CU(\mathbb{N}\cap [0,i])$ from \ref{boundedcountableuniversal}.
Define $CU$ as the total space in a metric resolution $f:CU\to \mathbb{N}$ so that $f^{-1}(i)=CU(\mathbb{N}\cap [0,i])$ for each $i\in \mathbb{N}$.
By \ref{ExistenceOfMetricResolutions} and \ref{ExistenceOfUltraMetricResolutions} such a resolution exists
and by \ref{IsometricembeddingsAndResolutions} together with \ref{UltrametricResolution} the space $CU$ has the needed property.
\end{proof}

\section{Universal spaces}\label{universal spaces}

In this section we prove our main applications. That is, we give a detailed construction of universal spaces (with respect to coarse embeddings) in the classes of separable metric spaces of asymptotic dimension $0$ and the class of proper metric spaces of asymptotic dimension $0$. 

\begin{Theorem}
There is a countable integral ultrametric space $CU$ such that any separable metric space $X$ of asymptotic dimension $0$ coarsely embeds in $CU$.
\end{Theorem}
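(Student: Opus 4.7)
The plan is to take $CU$ to be the countable integral ultrametric space produced by Corollary \ref{UniversalForCountableUltrametric}, and then show that every separable asymptotic dimension zero space coarsely embeds into it by composing two maps supplied by earlier results.

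First I would invoke Theorem \ref{countableintegral}: given a separable metric space $(X,d_X)$ of asymptotic dimension $0$, there is a countable integral ultrametric space $(Y,d_Y)$ together with a coarse equivalence $h:X\to Y$. Next I would apply Corollary \ref{UniversalForCountableUltrametric} to the countable integral ultrametric space $Y$, obtaining an isometric embedding $j:Y\to CU$. An isometric embedding is in particular a coarse embedding (it is $1$-Lipschitz and preserves distances exactly on its image, hence uniformly bornologous and uniformly proper).

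The desired map is the composition $j\circ h:X\to CU$. The only thing to check is the routine fact that the composition of a coarse equivalence with a coarse embedding is again a coarse embedding, which is immediate from the definitions: if $h$ is uniformly bornologous and uniformly proper, and $j$ is uniformly bornologous and uniformly proper, then so is $j\circ h$. (Coarse surjectivity of $h$ is not needed here, since we only want an embedding, not an equivalence.)

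There is essentially no main obstacle — all of the real work has already been done, namely the passage from an arbitrary separable space of asymptotic dimension $0$ to a countable integral ultrametric space in Theorem \ref{countableintegral}, and the recursive construction of the universal countable integral ultrametric space in Corollary \ref{UniversalForCountableUltrametric} (which in turn rests on Propositions \ref{ExistenceOfUltraMetricResolutions}, \ref{UltrametricResolution}, and \ref{IsometricembeddingsAndResolutions}). The theorem is therefore a one-paragraph corollary of what has been developed in the preceding sections.
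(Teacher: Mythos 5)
Your proposal is correct and follows exactly the paper's own argument: reduce to the countable integral ultrametric case via Theorem \ref{countableintegral}, then isometrically embed into the $CU$ of Corollary \ref{UniversalForCountableUltrametric}. The only difference is that you spell out the routine composition step that the paper leaves implicit.
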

\begin{proof}
Consider $CU$ from \ref{UniversalForCountableUltrametric}. We claim that $CU$ is the desired universal space. In light of Theorem \ref{countableintegral} it will suffice to show that if $(X,d)$ is a countable integral ultrametric space, then $X$ coarsely embeds into $CU$. Then let $(X,d)$ be such a space. Apply \ref{UniversalForCountableUltrametric}.
\end{proof}

For the following two results, let $D_{1},D_{2},\ldots$ be an enumeration of the finite subsets of $\mathbb{N}$ that contain $0$. 

\begin{Theorem}\label{ProperDimZeroCase}
There is a countable and proper integral ultrametric space $PU$ such that any proper metric space $X$ of asymptotic dimension $0$ coarsely embeds in $PU$.
\end{Theorem}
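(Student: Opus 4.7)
The plan is to define $PU$ as a coarse disjoint union containing infinitely many copies of each of the finite universal spaces $FU(m, D_i)$ constructed in Section \ref{SpecialUltrametricSpaces}, and then to piece together embeddings fiber by fiber using Proposition \ref{TwoDifferentCoarseUnionsProp}. By Theorem \ref{countableintegral}, every proper metric space of asymptotic dimension $0$ is coarsely equivalent to a countable integral ultrametric space $Y$ whose bounded subsets are all finite, so it suffices to coarsely embed every such $Y$ into $PU$.

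For the construction of $PU$, I would take the coarse disjoint union indexed by the countable set $\mathbb{N}\times\mathbb{N}\times\mathbb{N}$ of the family whose $(m,i,j)$-term is a copy $FU(m,D_i)^{(j)}$ of $FU(m,D_i)$. The corollary at the end of Section \ref{CoarseDisjointUnions} guarantees that this coarse disjoint union exists and can be equipped with an integral ultrametric, and Proposition \ref{SeparabilityPropernessOfUnions}, together with the finiteness of each $FU(m,D_i)$, ensures that $PU$ is proper.

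To embed a given proper $Y$ as above, apply Corollary \ref{UltrametricAsCoarseDisjointUnion} to write $Y=\coprod_{n} Y_n$ as a coarse disjoint union of bounded sets $Y_n$; each $Y_n$ is finite because $Y$ is proper. Set $m_n=|Y_n|$ and let $D^{(n)}\subseteq\mathbb{N}$ be the finite set of distances occurring in $Y_n$ together with $0$, so $D^{(n)}=D_{i_n}$ for some index $i_n$. By Proposition \ref{boundedfiniteuniversal}, $Y_n$ admits an isometric embedding into $FU(m_n,D_{i_n})$. Exploiting the countably many available copies inside $PU$, assign to each $n$ a distinct copy $F_n$ of $FU(m_n,D_{i_n})$. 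The subspace $\coprod_n F_n\subseteq PU$ inherits the restricted metric and is itself a coarse disjoint union of $\{F_n\}_n$, since condition 2 of Definition \ref{DefCoarseDisjointUnion} is preserved under passing to any subfamily of parts. Proposition \ref{TwoDifferentCoarseUnionsProp}(1) then assembles the isometric embeddings $Y_n\hookrightarrow F_n$ into a single coarse embedding $Y\to\coprod_n F_n\hookrightarrow PU$, as required.

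The only step that demands any care is the observation that restricting a coarse disjoint union to a chosen subfamily of its parts yields another coarse disjoint union, which is what licenses the application of Proposition \ref{TwoDifferentCoarseUnionsProp} above; this is immediate from the definition but deserves explicit mention. Everything else is a direct assembly of the machinery built in Sections \ref{MetricResolutions}--\ref{SpecialUltrametricSpaces}.
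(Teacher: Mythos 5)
Your proposal is correct and follows essentially the same route as the paper: reduce via Theorem \ref{countableintegral} and Corollary \ref{UltrametricAsCoarseDisjointUnion} to a coarse disjoint union of finite $D$-ultrametric pieces, embed each piece isometrically into a suitable $FU(m,D_i)$ from Proposition \ref{boundedfiniteuniversal}, and assemble with Proposition \ref{TwoDifferentCoarseUnionsProp}. The only cosmetic difference is that you build $PU$ from infinitely many copies of each $FU(m,D_i)$ (which neatly guarantees distinct targets for distinct pieces), whereas the paper uses a single enumeration and chooses a strictly increasing sequence of index sets $D_{n_k}$ to the same effect; your explicit remark that a subfamily of parts of a coarse disjoint union is again a coarse disjoint union is a point the paper leaves implicit.
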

\begin{proof}
We again use the enumeration $D_{1},D_{2},\ldots$ of the finite subsets of $\mathbb{N}$ that contain $0$. The set of all $FU(m,D_{n})$ (where defined) is countable. Enumerate these spaces and denote this sequence $\{Y_{1},Y_{2},\ldots\}$. Let $r_i=i+\sum\limits_{j=1}^i diam(Y_j)$ for $i\ge 1$.

We then define $PU$ as the the total space of a metric resolution
$f:PU\to S$, where $S=\{r_i\}_{i\ge 1}$ is equipped with the max ultrametric
and $f^{-1}(r_i)$ is the corresponding $FU(m,D_{n})$ for each $i\ge 1$.

It is proper by \ref{SeparabilityPropernessOfUnions}. Let $(X,d)$ be a countable proper metric space of asymptotic dimension $0$. By Theorems \ref{countableintegral} and \ref{UltrametricAsCoarseDisjointUnion} we may assume without loss of generality that that $X$ can be written as a coarse disjoint union of finite $G_k$-ultrametric spaces $X_k$. There is a strictly increasing sequence $(n_{k})_{k\in\mathbb{N}}$ such that $G_k\subset D_{n_{k}}$ for each $k\ge 1$ and $D_{n_{k}}$ is a proper subset of $D_{n_{k+1}}$ for each $k\ge 1$.
Then, by Proposition \ref{TwoDifferentCoarseUnionsProp} we have that $X$ coarsely embeds into $PU$.
\end{proof}

The following result means that, in dimension $0$, our Theorem \ref{ProperDimZeroCase} is not implied by the work of Bell and Nag\'orko \cite{BN}:

\begin{Proposition}\label{ANonProperExample}
There is a separable space $X$ of asymptotic dimension $0$ that is not coarsely embeddable into a proper metric space.
\end{Proposition}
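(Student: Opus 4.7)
The plan is to construct $X$ so that any potential coarse embedding into a proper metric space is obstructed by uncontrollable local \emph{capacity} growth inside bounded pieces. Concretely, for each $n\ge 1$ let $B_n$ be a countably infinite set carrying the ultrametric in which all distinct pairs are at distance exactly $n$, and let $X$ be a coarse disjoint union of $\{B_n\}_{n\ge 1}$ equipped with an ultrametric (such a union exists by the countable-family corollary in Section~\ref{CoarseDisjointUnions}). Then $X$ is countable, hence separable, and has asymptotic dimension $0$: for every $r>0$, the $r$-components inside each $B_n$ are either singletons (when $n\ge r$) or all of $B_n$ of diameter $n<r$, while the coarse-disjoint-union condition confines cross-piece $r$-interactions to a fixed bounded subset of $X$; together these give a uniform bound on $r$-component diameters.

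The non-embeddability argument would then be short. Assume $f\colon X\to Y$ is a coarse embedding with $Y$ proper. Uniform properness at radius $1$ produces $T>0$ such that any two points at $X$-distance exceeding $T$ must have $f$-images at distance greater than $1$. Pick any integer $n>T$: every two distinct points of $B_n$ sit at $X$-distance $n>T$, hence map to points pairwise more than $1$ apart in $Y$, so $f(B_n)$ is an infinite $1$-separated subset of $Y$. Uniform bornology then confines $f(B_n)$ to a bounded subset of $Y$; properness forces that subset to have compact, hence totally bounded, closure --- contradicting the existence of infinitely many pairwise $1$-separated points in it.

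I do not foresee a serious obstacle here. The only step requiring some care is the simultaneous demand that each $B_n$ be an infinite $n$-equilateral piece while the ambient $X$ remains an \emph{ultrametric} space of asymptotic dimension $0$; both are handled by the constructions of Sections \ref{MetricResolutions} and \ref{CoarseDisjointUnions}. The conceptual core is that $X$ realises arbitrarily large pairwise-separation inside bounded pieces of unbounded size, so no single properness gauge $T(1)$ coming from an embedding into a proper space can absorb the whole sequence $\{B_n\}$.
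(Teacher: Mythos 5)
Your proof is correct, and your example is essentially the paper's: a coarse disjoint union of infinite equilateral ultrametric pieces with separation growing without bound (the paper takes infinitely many pieces realizing each separation value $n+1$; one per value suffices for your argument). The difference lies in how the contradiction is extracted. The paper pulls properness back into $X$: it extracts a coarsely dense subset $Y\subseteq X$ all of whose bounded subsets are finite and shows $Y$ cannot be coarsely $n$-dense in a piece $X_i$ that is both $(n+1)$-separated and at distance greater than $n$ from the rest of $X$ --- which is exactly why it needs infinitely many pieces of each separation value, to guarantee at least one of them is far from everything else. You instead push forward: for $n$ exceeding the properness gauge $T$ at scale $1$, the image $f(B_n)$ is an infinite $1$-separated set, while bornologousness confines it to a bounded, hence totally bounded, subset of the proper target. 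Your version is more direct and sidesteps the net-extraction step, which the paper leaves implicit; it also explains why a single piece per separation value is enough. The only loose point in your write-up is the sketch of the asymptotic dimension claim via $r$-components, but since the coarse disjoint union of a countable family of integral ultrametric spaces can be taken to carry an integral ultrametric, asymptotic dimension $0$ follows at once from the paper's observation that isosceles (in particular ultrametric) spaces have asymptotic dimension at most $0$, so nothing is at stake there.
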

\begin{proof}
Consider a sequence of bounded $\mathbb{N}$-ultrametric spaces $X_i$ such that for each $n\ge 1$ there is infinitely many $i$ such that $X_i$ contains infinitely countably many points and the distance between any two different points is $n+1$. Let $X$ be a coarse disjoint union of the sequence $X_i$. If $X$ coarsely embeds into a proper metric space, then it contains $Y$ with the property that $B(Y,n)=X$ for some $n\ge 2$ and every bounded subset of $Y$ is finite. There is $i$ such that
$X_i$ contains infinitely countably many points, the distance between any two different points is $n+1$, and $dist(X_i,X\setminus X_i) > n$. 
Now, $Y\cap X_i$ must be finite, $B(Y,n)\cap X_i=B(Y\cap X_i,n)=Y\cap X_i$, a contradiction.
\end{proof}

\begin{Remark}
\ref{ANonProperExample} can be easily generalized to any dimension $n$.
\end{Remark}

\begin{Problem}
Given $n\ge 1$ is there a universal space in the class of proper metric spaces of asymptotic dimension at most $n$?
\end{Problem}

\section{Ultrametric groups as universal spaces}

In this section we show that certain unbounded ultrametric groups are universal in respective categories of spaces of asymptotic dimension $0$.

\begin{Definition}
An \textbf{(integral) ultrametric group} is a group equipped with a left-invariant (integral) ultrametric $d$. 
\end{Definition}

\begin{Proposition}\label{UltrametricsInducedBySubgroups}
Suppose $G$ is a group and $D$ is a discrete subset of non-negative reals containing $0$. Assigning $G$ a left-invariant $D$-ultrametric $d$ is equivalent to picking subgroups $G_a$, $a\in D$, of $G$ satisfying the following conditions:\\
1. $G_0=\{1_G\}$,\\
2. $G_a$ is a subgroup of $G_b$ if $a < b$ belong to $D$,\\
3. $\bigcup\limits_{a\in D}G_a=G$.
\end{Proposition}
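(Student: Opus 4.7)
The plan is to exhibit an explicit bijection between left-invariant $D$-ultrametrics on $G$ and filtrations $(G_a)_{a\in D}$ satisfying (1)--(3), by passing back and forth through the operation ``closed ball around the identity.''

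For the forward direction, given a left-invariant $D$-ultrametric $d$, I would set
\[G_a := \{g\in G : d(g,1_G)\leq a\}.\]
Condition (1) is immediate since $d(g,1_G)=0$ forces $g=1_G$. Condition (2) is monotonicity of closed balls, and (3) follows from $d$ taking values in $D$. The content is that each $G_a$ is a subgroup. Closure under inverses uses left-invariance: $d(g^{-1},1_G)=d(1_G,g)=d(g,1_G)\leq a$. Closure under multiplication combines left-invariance with the ultrametric inequality: if $d(g,1_G),d(h,1_G)\leq a$, then $d(gh,g)=d(h,1_G)\leq a$, hence $d(gh,1_G)\leq\max(d(gh,g),d(g,1_G))\leq a$.

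For the reverse direction, given subgroups $(G_a)_{a\in D}$, I first note that a discrete subset $D\subseteq[0,\infty)$ is well-ordered (its infima are attained, else one would produce an accumulation point contradicting discreteness). Hence by (3) the quantity
\[d(g,h) := \min\{a\in D : g^{-1}h\in G_a\}\]
is well-defined. I would then verify in sequence: $d(g,h)=0\iff g=h$ by (1); symmetry from $(g^{-1}h)^{-1}=h^{-1}g\in G_a$ since $G_a$ is a subgroup; the ultrametric inequality by writing $g^{-1}k=(g^{-1}h)(h^{-1}k)$ and using that $G_{\max(a,b)}$ is a group containing both factors via (2); and finally left-invariance from the identity $(xg)^{-1}(xh)=g^{-1}h$. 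By construction the metric takes values in $D$.

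It then remains to check that the two constructions are mutually inverse. Starting from $d$, the reconstructed metric at $(g,h)$ equals the least $a\in D$ with $d(g^{-1}h,1_G)\leq a$, which by left-invariance is $d(g,h)$ itself. Starting from $(G_a)$, the reconstructed subgroup $\{g:d(g,1_G)\leq a\}$ equals $\{g:\min\{b:g\in G_b\}\leq a\}=G_a$ using (2). I expect no serious obstacle: the main subtlety is simply recognizing that discreteness of $D$ in $[0,\infty)$ is exactly what is needed for the minima defining $d$ to exist.
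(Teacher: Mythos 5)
Your proposal is correct and follows essentially the same route as the paper: closed balls about $1_G$ give the filtration, and $d(g,h)=\min\{a\in D: g^{-1}h\in G_a\}$ recovers the ultrametric. You are in fact somewhat more thorough than the paper, which omits the verification that the two constructions are mutually inverse and glosses over the attainment of the infimum that you handle via the well-ordering of $D$.
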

\begin{proof}
Given a left-invariant $D$-ultrametric $d$ on $G$ and given $a\in D$ define $G_a$ as all $g\in G$ satisfying $d(g,1_G)\leq a$. Notice $g\in G_a$ implies $g^{-1}\in G_a$ as $d(g^{-1},1_G)=d(g\cdot g^{-1},g\cdot 1_G)=d(1_G,g)$.
Also, if $g,h\in G_a$, then $d(g\cdot h,1_G)\leq \max(d(g\cdot h,g),d(g,1_G))=\max(d(h,1_G),d(g,1_G))\leq a$. It is obvious that $\{G_a\}_{a\in D}$ satisfy Conditions 1-3.

Given $\{G_a\}_{a\in D}$ satisfying Conditions 1-3 define $d(g,h)$
as the infimum of $a\in D$ satisfying $g^{-1}\cdot h\in G_a$. If $d(g,h),d(h,k)\leq a$, then $g^{-1}\cdot h\in G_a$ and $ h^{-1}\cdot k\in G_a$,
so their product $g^{-1}\cdot k$ belongs to $G_a$ and $d(g,k)\leq a$. That means $d$ is an ultrametric, indeed.
\end{proof}

\begin{Definition}
Given a discrete subset $D$ of non-negative reals containing $0$ and given subgroups $G_a$, $a\in D$, of $G$ satisfying the following conditions:\\
1. $G_0=\{1_G\}$,\\
2. $G_a$ is a subgroup of $G_b$ if $a < b$ belong to $D$,\\
3. $\bigcup\limits_{a\in D}G_a=G$,\\
the ultrametric $d$ in \ref{UltrametricsInducedBySubgroups} is said to be \textbf{induced} by $\{G_a\}_{a\in D}$.
\end{Definition}

\begin{Proposition}\label{CoarseEquivalenceUltrametricStructures}
Suppose $G$ is a group and $d_i$, $i=1,2$, are two ultrametrics induced by families $\{G^i_a\}_{a\in D_i}$ of subgroups of $G$.
$(G,d_1)$ is coarsely equivalent to $(G,d_2)$ if and only if for each $a\in D_i$ and $j\ne i$ there is $b\in D_j$, such that $G^i_a\subset G^j_b$.
\end{Proposition}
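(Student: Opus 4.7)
The crux of the proof is an identification of metric balls with subgroups. For each $a\in D_i$, the formula $d_i(g,1_G)=\inf\{c\in D_i:g\in G^i_c\}$ from Proposition \ref{UltrametricsInducedBySubgroups} together with the discreteness of $D_i$ makes the infimum a minimum, so the closed ball $B_{d_i}(1_G,a)$ equals exactly the subgroup $G^i_a$. More generally for arbitrary $R>0$ one has $B_{d_i}(1_G,R)=G^i_a$ with $a=\max\{c\in D_i:c\le R\}$. By left-invariance every $d_i$-ball is a left-translate of such a subgroup, so the entire bornology of $(G,d_i)$ is encoded by the chain $\{G^i_a\}_{a\in D_i}$.

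For the backward direction, assume the cofinality condition. Given $R>0$ put $a=\max\{c\in D_1:c\le R\}$ and choose $b\in D_2$ with $G^1_a\subseteq G^2_b$. If $d_1(x,y)\le R$ then $x^{-1}y\in G^1_a\subseteq G^2_b$, whence $d_2(x,y)\le b$. This is exactly uniform bornologousness of $\mathrm{id}\colon(G,d_1)\to(G,d_2)$. The symmetric hypothesis gives uniform bornologousness of $\mathrm{id}\colon(G,d_2)\to(G,d_1)$, which for the bijection $\mathrm{id}$ is uniform properness of $\mathrm{id}\colon(G,d_1)\to(G,d_2)$. Surjectivity being automatic, $\mathrm{id}$ is a coarse equivalence.

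For the forward direction, the natural interpretation (and the one consistent with the rest of the paper) is that the coarse equivalence is realised by the identity; any coarse equivalence $f\colon(G,d_1)\to(G,d_2)$ can in any case be normalised so that $f(1_G)=1_G$ by composing with the left-translation by $f(1_G)^{-1}$, which is an isometry of $d_2$. Under this reading, uniform bornologousness of $\mathrm{id}\colon(G,d_1)\to(G,d_2)$ gives, for each $a\in D_1$, an $S>0$ with $G^1_a=B_{d_1}(1_G,a)\subseteq B_{d_2}(1_G,S)=G^2_b$ where $b=\max\{c\in D_2:c\le S\}$. Uniform properness of $\mathrm{id}$ applied symmetrically yields $G^2_a\subseteq G^1_b$ for each $a\in D_2$.

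The main obstacle is conceptual: once one verifies that closed balls at $1_G$ coincide with the subgroups $G^i_a$, both directions reduce to unwinding definitions. The only place where care is needed is the tacit passage, in the forward direction, from an arbitrary coarse equivalence to one centred at $1_G$, where one leans on left-invariance of $d_2$ to absorb the basepoint and on the fact that the $G^i_a$ are actual subgroups (not merely bounded subsets) so that the inclusion $G^1_a\subseteq G^2_b$ follows directly from the inclusion of the corresponding balls at the identity.
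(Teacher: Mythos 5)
Your argument is essentially the paper's: both rest on the identification $B_{d_i}(1_G,a)=G^i_a$ and then read the subgroup inclusions off from large scale continuity of the identity in each direction; your backward direction merely fills in the details the paper dismisses with ``the reverse implication is similar.''

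The one step that does not close is the one you flag yourself: the passage from an arbitrary coarse equivalence $f$ to the identity. Normalising so that $f(1_G)=1_G$ does not make $f$ the identity, and no further argument can be supplied, because the ``only if'' direction is actually false if ``coarsely equivalent'' means ``there exists a coarse equivalence.'' Take $G=\bigoplus_{i\in\mathbb{Z}}\mathbb{Z}/2\mathbb{Z}$ with basis $\{e_i\}_{i\in\mathbb{Z}}$, put $G^1_0=G^2_0=\{1_G\}$, and for $n\ge 1$ set $G^1_n=\langle e_i: i\le n\rangle$ and $G^2_n=\langle e_i: i\ge -n\rangle$; both families satisfy Conditions 1--3. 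The automorphism determined by $e_i\mapsto e_{-i}$ carries $G^1_n$ onto $G^2_n$ for every $n$, hence is an isometry $(G,d_1)\to(G,d_2)$, yet $G^1_1$ contains $e_{-m}$ for all $m$ and so lies in no $G^2_b$: the stated condition fails. The paper's own proof has exactly the same restriction (it only ever considers the identity map), so your reading is the intended one; but you should state and prove the proposition as ``the identity $(G,d_1)\to(G,d_2)$ is a coarse equivalence if and only if\dots'' rather than suggest that the basepoint normalisation disposes of general $f$.
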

\begin{proof}
Assume the identity $(G,d_1)\to (G,d_2)$ is large scale continuous (aka bornologous)
and $a\in D_1$. There is $b\in D_2$ such that $d_1(g,h)\leq a$ implies $d_2(g,h)\leq b$, so $g\in G^1_a$ implies $g\in G^1_b$
as $d_1(g,1_G)\leq a$ implies $d_2(g,1_G)\leq b$ and $g\in G^2_b$.

The reverse implication is similar.
\end{proof}

\begin{Proposition}\label{CoarseEmbeddingsViaBoundedSubsets}
Suppose $(X,d_X)$ is an integral ultrametric space and $(G,d_G)$ is an integral ultrametric group. If every bounded subset $B$ of $X$ isometrically embeds in $G$, then $(X,d_X)$ coarsely embeds in $(G,d_G)$.
\end{Proposition}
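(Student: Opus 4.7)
The plan is to decompose $X$ as a coarse disjoint union of bounded pieces via \ref{UltrametricAsCoarseDisjointUnion}, isometrically embed each piece into $G$ using the hypothesis, and then left-translate these embeddings to mutually distant regions of $G$ so that the assembled map is a coarse embedding. If $X$ is bounded, the hypothesis directly provides an isometric embedding, so assume $X$ is unbounded.

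Fix $x_0 \in X$ and let $0 < s_1 < s_2 < \cdots$ enumerate the positive values of $d_X(x_0,\cdot)$. Setting $X_i = \{x \in X : d_X(x_0,x) = s_i\}$, \ref{UltrametricResolution} shows that $X_i$ has diameter at most $s_i$ and $d_X(x,y) = \max(s_i,s_j)$ whenever $x \in X_i$, $y \in X_j$, $i \ne j$. By hypothesis pick isometric embeddings $\phi_i : X_i \to G$; after replacing $\phi_i$ by $\phi_i(x_i^*)^{-1}\phi_i(\cdot)$ for some chosen $x_i^* \in X_i$, left-invariance of $d_G$ ensures $\phi_i$ is still isometric and $\phi_i(X_i) \subseteq G_{s_i}$ (the closed ball of radius $s_i$ around $1_G$, in the notation of \ref{UltrametricsInducedBySubgroups}). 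Since $X$ is unbounded and its two-point bounded subsets embed into $G$, the group $G$ is also unbounded, so $T := \{d_G(1_G,g) : g \in G\}$ is an unbounded subset of $\mathbb{N}$; pick a strictly increasing sequence $t_1 < t_2 < \cdots$ in $T$ with $t_i > s_i$ for every $i$, and choose $g_i \in G$ with $d_G(1_G,g_i) = t_i$. Distinctness of the $t_i$ together with the ultrametric inequality forces $d_G(g_i,g_j) = \max(t_i,t_j)$ for $i \ne j$.

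Define $\Phi : X \to G$ by $\Phi(x) = g_i\,\phi_i(x)$ for $x \in X_i$. Left-invariance makes $\Phi$ isometric on each $X_i$. For $x \in X_i$, $y \in X_j$ with $i \ne j$, the key step is the ultrametric computation: since $d_G(g_i,\Phi(x)) \le s_i < t_i$ and $d_G(g_j,\Phi(y)) \le s_j < t_j$, applying the ultrametric triangle inequality along $g_i \to \Phi(x) \to \Phi(y) \to g_j$ forces $d_G(\Phi(x),\Phi(y)) = d_G(g_i,g_j) = \max(t_i,t_j)$. From this explicit formula, the bornologous condition (a bound $R$ on $d_X$ forces $\max(s_i,s_j) \le R$, bounding the indices, hence $\max(t_i,t_j)$) and the uniformly proper condition (a bound $R$ on $d_G$ bounds $\max(t_i,t_j)$, hence the indices, hence $\max(s_i,s_j)$) both follow from the fact that only finitely many $s_i$, respectively $t_i$, lie below any given threshold. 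The main subtlety is precisely the ultrametric equality above; the remainder of the argument is routine bookkeeping, and unboundedness of $G$, which is needed to choose the $g_i$, is forced by the hypothesis applied to two-point subsets of the unbounded space $X$.
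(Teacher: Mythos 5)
Your proof is correct and follows essentially the same strategy as the paper's: decompose $X$ into bounded pieces around a basepoint (you use spheres, the paper uses annuli determined by a sparse sequence of radii), isometrically embed each piece near $1_G$, and left-translate by group elements marching off to infinity, with your explicit computation $d_G(\Phi(x),\Phi(y))=\max(t_i,t_j)$ standing in for the paper's appeal to the coarse disjoint union machinery of \ref{TwoDifferentCoarseUnionsProp}. The only (trivial) omission is that $x_0$ itself lies in no $X_i$; sending it to $1_G$ and noting that $d_G(1_G,\Phi(y))=t_j$ for $y\in X_j$ completes the definition of $\Phi$ without affecting either estimate.
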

\begin{proof}
Of interest is only the case of $X$ being unbounded, so $G$ is also unbounded. Pick $x_0\in X$ and a sequence $\{x_n\}_{n\ge 1}$ of points in $X$ such that $d(x_{n+1},x_0) > d(x_n,x_0)+1$ for each $n\ge 1$.
Put $r_n=d(x_n,x_0)$ for $n\ge 1$ and pick an isometric embedding
$i_n:B_n\to G$, where $B_n=\{x\in X | r_{n-1} < d(x,x_0) \leq r_n$ for $n\ge 2$ and $B_1=\{x\in X | d(x,x_0) \leq r_1$. We may assume $i_n(x_n)=1_G$ for each $n\ge 1$.
Now pick a sequence $\{g_n\}_{n\ge 1}$ of elements of $G$ such that $d_G(g_1,1_G) > r_1$
and $s_n:=d_G(g_n,1_G) > d(g_{n-1},1_G)+diam(B_n)$. Replacing $i_n$
by $j_n:=g_n\cdot i_n$ we obtain a sequence of isometric embeddings
of $B_n$ into $C_n:= \{g\in G | s_{n-1} < d(g,1_G) \leq s_n\}$.
By \ref{IsometricembeddingsAndResolutions}, $X$ coarsely embeds in $G$.
\end{proof}

\begin{Corollary}\label{BasicCoarseEmbeddingIntoAGroup}
Suppose $(X,d_X)$ is an integral ultrametric space such that for each $n$ 
there is a cardinal number $c(n)$ with the property that each ball $B(x,n+2)$, $x\in X$, has cardinality at most $c(n)$. If $(G,d_G)$ is an integral ultrametric group induced by a sequence of subgroups $\{G_n\}_{n\ge 1}$ with the property that the cardinality of cosets of $G_n$ in $G_{n+1}$ is at least $c(n)$ for each $n\ge 1$, then $(X,d_X)$ coarsely embeds in $(G,d_G)$.
\end{Corollary}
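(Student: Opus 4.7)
The plan is to invoke Proposition \ref{CoarseEmbeddingsViaBoundedSubsets}, which reduces the statement to showing that every bounded $B \subseteq X$ admits an isometric embedding into $(G, d_G)$. I would prove by induction on $m \ge 0$ the slightly stronger statement that any $B \subseteq X$ of diameter at most $m$ admits an isometric embedding into the subgroup $G_m$ (with $G_0 = \{1_G\}$). The base case $m = 0$ is trivial: a singleton maps to $1_G$.

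For the inductive step, I would use the key feature of integral ultrametric spaces: the relation $x \sim y \iff d_X(x,y) \le m-1$ is an equivalence relation on $B$, partitioning it into classes $B_1, \ldots, B_k$, each of diameter $\le m-1$. By induction, each $B_i$ admits an isometric embedding $\phi_i : B_i \to G_{m-1}$. Choosing a representative $y_i \in B_i$ for each $i$, the ultrametric triangle inequality together with $\diam(B) \le m$ forces $d_X(y_i, y_j) = m$ whenever $i \ne j$, so all $k$ representatives lie in the ball $B_X(y_1, m+1) = B_X(y_1, (m-1)+2)$. The hypothesis on $X$ bounds this ball's cardinality by $c(m-1)$, while the hypothesis on $G$ provides at least $c(m-1)$ cosets of $G_{m-1}$ in $G_m$; hence I may pick coset representatives $g_1 = 1_G, g_2, \ldots, g_k \in G_m$ lying in $k$ distinct cosets. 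Define $\Phi : B \to G_m$ by $\Phi(x) = g_i \cdot \phi_i(x)$ whenever $x \in B_i$.

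Checking $\Phi$ is an isometry is routine: left-invariance of $d_G$ yields the correct distances on pairs inside a single class, and for $x \in B_i$, $y \in B_j$ with $i \ne j$ the element $\Phi(x)^{-1}\Phi(y) = \phi_i(x)^{-1}(g_i^{-1}g_j)\phi_j(y)$ lies in $G_{m-1}\cdot(G_m\setminus G_{m-1})\cdot G_{m-1} \subseteq G_m\setminus G_{m-1}$, yielding $d_G(\Phi(x), \Phi(y)) = m = d_X(x,y)$. The only real subtlety is the counting bookkeeping: the ``$+2$'' offset in the hypothesis $|B(x, n+2)| \le c(n)$ is precisely calibrated so that the ball $B_X(y_1, m+1)$ containing the class representatives has cardinality at most $c(m-1)$, matching exactly the $c(m-1)$ cosets available in $[G_m : G_{m-1}]$. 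Once this alignment is observed, the construction is a direct translation of the ultrametric coset-tree structure of $X$ into the subgroup-chain structure of $G$.
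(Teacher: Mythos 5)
Your proposal is correct and follows essentially the same route as the paper: an induction on the scale in which a bounded set (equivalently, a ball) is split into equivalence classes under $d_X(x,y)\le m-1$, each class is embedded into $G_{m-1}$ by the inductive hypothesis, and the classes are separated by placing them into distinct cosets of $G_{m-1}$ in $G_m$, with the cardinality hypothesis on balls of radius $n+2$ supplying exactly the needed count of cosets; the reduction to isometric embeddings of bounded sets via Proposition~\ref{CoarseEmbeddingsViaBoundedSubsets} is the same in both arguments.
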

\begin{proof}
Suppose each bounded subset of $X$ of diameter at most $n$ isometrically embeds in $G$. Therefore, for each $x\in X$, there is an isometric embedding
$i_x:B(x,n+1)\to G_n$ such that $i_x(x)=1_G$. Suppose $x_0\in X$. Consider the equivalence relation $x\sim y$ on $B(x_0,n+2)$ defined by $d_X(x,y) < n+1$. For each equivalence class $c$ not containing $x_0$ choose $x(c)\in B(x_0,n+2)\setminus B(x_0,n+1)$ and $g_c\in G_{n+1}$ such that if $c\ne k$, then $g_c^{-1}\cdot g_k\notin G_n$. 
Extend $i_{x_0}$ over $B(x_0,n+2)$ to a function $j$ by sending $x(c)$ to $g_c$ and by sending
any $x$ equivalent to $x_c$ to $g_c\cdot i_{x(c)}(x)$.
Notice $j$ is an isometric embedding when restricted to each equivalence class, the images of different equivalence classes are disjoint, and
if $d_X(x,y)=n+1$, then $d_G(j(x),j(y))=n+1$. That means $j$ is an isometric embedding.
\end{proof}

\begin{Corollary}
\label{UniversalGroupForSeparableCase}
Suppose $G$ is a countable group that is the union of an increasing sequence of its subgroups $\{G_i\}_{i\ge 1}^\infty$ with the property that the index of $G_i$ in $G_{i+1}$ is infinite for each $i\ge 1$. There is an integral ultrametric $d_G$ on $G$ such that $(G,d_G)$ is a universal space in the category of separable metric spaces of asymptotic dimension $0$.
\end{Corollary}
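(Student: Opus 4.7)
The plan is to equip $G$ with a left-invariant integral ultrametric built from the given subgroup filtration, and then deduce universality by combining Theorem \ref{countableintegral} with Corollary \ref{BasicCoarseEmbeddingIntoAGroup}. The first step is to prepend $G_0 := \{1_G\}$ to the filtration, so that the enlarged family $\{G_i\}_{i\ge 0}$ satisfies the three hypotheses of Proposition \ref{UltrametricsInducedBySubgroups}. This produces a left-invariant integral ultrametric $d_G$ on $G$ in which $d_G(g,h)\le n$ if and only if $g^{-1}h \in G_n$.

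To establish universality, I would let $(X,d_X)$ be an arbitrary separable metric space of asymptotic dimension $0$ and apply Theorem \ref{countableintegral} to replace it, up to coarse equivalence, by a countable integral ultrametric space. It therefore suffices to coarsely embed each such countable integral ultrametric space into $(G,d_G)$. For this I would appeal to Corollary \ref{BasicCoarseEmbeddingIntoAGroup} with $c(n)=\aleph_0$ for every $n$. Countability of $X$ makes each ball $B(x,n+2)$ of cardinality at most $\aleph_0=c(n)$; countability of $G$ together with the hypothesis $[G_{i+1}:G_i]=\infty$ forces each coset set to have cardinality exactly $\aleph_0$, which is $\ge c(n)$. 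The corollary then supplies the desired coarse embedding of $X$ into $(G,d_G)$, and composing with the coarse equivalence from Theorem \ref{countableintegral} gives the embedding of the original separable space.

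The main obstacle I anticipate is purely bookkeeping: making sure that inserting $G_0=\{1_G\}$ is compatible with the indexing convention used in Corollary \ref{BasicCoarseEmbeddingIntoAGroup} (whose filtration is indexed from $n\ge 1$), and that the \emph{infinite index} hypothesis of the present statement translates to the cardinality inequality $[G_{n+1}:G_n]\ge c(n)$ required by that corollary in the countable setting. Once this alignment is made, no further constructions are needed: everything reduces to the previously established machinery of metric resolutions and coarse disjoint unions.
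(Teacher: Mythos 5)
Your proposal is correct and is essentially the argument the paper intends: the corollary is stated without proof, but the proof of the immediately following theorem (the bounded-geometry case) follows exactly this pattern of inducing the ultrametric via Proposition \ref{UltrametricsInducedBySubgroups}, reducing to countable integral ultrametric spaces via Theorem \ref{countableintegral}, and invoking Corollary \ref{BasicCoarseEmbeddingIntoAGroup}. Your observation that taking $c(n)=\aleph_0$ makes any thinning of the subgroup sequence (and hence the appeal to \ref{CoarseEquivalenceUltrametricStructures}) unnecessary in the separable case is a correct and mild simplification of that template.
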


\begin{Corollary}
Let $G$ be a countable vector space over the rationals $Q$ that is of infinite algebraic dimension. There is an integral ultrametric $d_G$ on $G$ such that $(G,d_G)$ is a universal space in the category of separable metric spaces of asymptotic dimension $0$.
\end{Corollary}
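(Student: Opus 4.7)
The plan is to deduce this directly from Corollary \ref{UniversalGroupForSeparableCase}; the only task is to exhibit $G$ as an increasing union of subgroups with infinite successive indices.

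Since $G$ is a countable $\mathbb{Q}$-vector space of infinite algebraic dimension, I would fix a countably infinite Hamel basis $\{e_1, e_2, \ldots\}$ of $G$ and set
$$G_i := \operatorname{span}_{\mathbb{Q}}\{e_1, \ldots, e_i\}$$
for each $i \ge 1$. Each $G_i$ is a subgroup (in fact a $\mathbb{Q}$-subspace) of $G$, the chain $G_1 \subset G_2 \subset \cdots$ is strictly increasing, and $\bigcup_{i \ge 1} G_i = G$ because every element of $G$ is a finite $\mathbb{Q}$-linear combination of basis vectors.

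The key numerical check is that $[G_{i+1} : G_i]$ is infinite. The quotient $G_{i+1}/G_i$ is a one-dimensional $\mathbb{Q}$-vector space (spanned by the class of $e_{i+1}$), hence is isomorphic as a group to $(\mathbb{Q}, +)$, which is countably infinite. Thus each successive index equals $|\mathbb{Q}| = \aleph_0$, and in particular is infinite.

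Having verified all hypotheses of Corollary \ref{UniversalGroupForSeparableCase}, I would invoke it to obtain an integral ultrametric $d_G$ on $G$ (the one induced, via \ref{UltrametricsInducedBySubgroups}, by the chain $\{G_i\}_{i \ge 1}$ with $G_0 = \{0\}$) such that $(G, d_G)$ is universal in the category of separable metric spaces of asymptotic dimension $0$. There is no real obstacle here: the whole content sits in Corollary \ref{UniversalGroupForSeparableCase}, and the present statement merely observes that infinite-dimensional countable $\mathbb{Q}$-vector spaces naturally supply the required filtration by subgroups of infinite index.
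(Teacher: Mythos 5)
Your proposal is correct and is precisely the argument the paper intends (it leaves this corollary as an immediate consequence of Corollary \ref{UniversalGroupForSeparableCase} without written proof): filter $G$ by the spans of initial segments of a Hamel basis, note each quotient is an infinite $\mathbb{Q}$-vector space so the successive indices are infinite, and apply \ref{UniversalGroupForSeparableCase}. No issues.
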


\begin{Theorem}
Suppose $G$ is a countable group that is the union of a strictly increasing sequence of its finite subgroups $\{G_i\}_{i\ge 1}^\infty$.There is a proper integral ultrametric $d_G$ on $G$ such that $(G,d_G)$ is a universal space in the category of metric spaces of bounded geometry that have asymptotic dimension $0$.
\end{Theorem}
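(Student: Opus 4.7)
The plan is to define $d_G$ via Proposition \ref{UltrametricsInducedBySubgroups} applied to a suitably sparse subsequence of $\{G_i\}$, and then to obtain the universal property from Corollary \ref{BasicCoarseEmbeddingIntoAGroup} after rescaling the source space. First I would inductively choose a strictly increasing $f:\mathbb{N}_{\ge 1}\to\mathbb{N}_{\ge 1}$ so that the indices $b_n:=[G_{f(n+1)}:G_{f(n)}]$, $n\ge 1$, form a non-decreasing sequence diverging to infinity; this is possible because each $G_i$ is finite while $|G_i|\to\infty$, so at each stage $f(n+1)$ can be chosen large enough that $b_n\ge\max(b_{n-1},n)$. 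Set $H_0=\{1_G\}$ and $H_n=G_{f(n)}$ for $n\ge 1$; Proposition \ref{UltrametricsInducedBySubgroups} then induces an integral ultrametric $d_G$ on $G$ for which the closed $n$-ball around any $g$ equals the finite coset $gH_n$. Hence $(G,d_G)$ is proper and of bounded geometry.

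Next, given a metric space $(X,d)$ of bounded geometry and asymptotic dimension zero, Theorem \ref{countableintegral} delivers a countable integral ultrametric space $(X_0,d_{X_0})$ coarsely equivalent to $X$. Its ball function $c_{X_0}(k):=\sup_{x\in X_0}|B_{X_0}(x,k)|$ is finite for every $k$: a $k$-ball in $X_0$ lies inside a $k$-chain component of $X$, which has uniformly bounded diameter by asymptotic dimension zero and hence uniformly bounded cardinality by bounded geometry of $X$.

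Finally, I would stretch the metric on $X_0$ to fit the subgroup ladder of $(G,d_G)$. Define $M(k):=\min\{n\ge 1:b_n\ge c_{X_0}(k)\}$, which is finite because $b_n\to\infty$, and pick a strictly increasing $\phi:\mathbb{N}\to\mathbb{N}$ with $\phi(0)=0$ and $\phi(k)\ge M(k)+2$ for every $k\ge 1$. Equip $X_0$ with the new metric $d'(x,y):=\phi(d_{X_0}(x,y))$; the resulting space $X'$ is an integral ultrametric space, and the identity $X_0\to X'$ is a coarse equivalence. For each $n\ge 1$, letting $k:=\max\{j:\phi(j)\le n+2\}$, one has $|B_{X'}(x,n+2)|=|B_{X_0}(x,k)|\le c_{X_0}(k)\le b_n=[H_{n+1}:H_n]$, so Corollary \ref{BasicCoarseEmbeddingIntoAGroup} yields a coarse embedding $X'\hookrightarrow(G,d_G)$. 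Composing with the coarse equivalences $X\sim X_0\sim X'$ gives the required coarse embedding of $X$ into $(G,d_G)$. The main obstacle is this last matching step: the hypothesis gives only $[G_{i+1}:G_i]\ge 2$, which is by itself far too weak, so both the passage to a sparse subsequence (forcing $b_n$ to be monotone and divergent) and the stretching (absorbing the possibly fast-growing $c_{X_0}$) are essential.
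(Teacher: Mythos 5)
Your proposal is correct, and it shares the paper's two essential ingredients: reduce $X$ to a countable integral ultrametric space whose $(n+2)$-balls have uniformly bounded cardinality, and then invoke Corollary \ref{BasicCoarseEmbeddingIntoAGroup} against a subgroup filtration whose consecutive indices dominate that ball function. The one genuine difference is where you absorb the mismatch between the growth of the balls of $X$ and the growth of the indices $[G_{i+1}:G_i]$. The paper keeps $d_G$ induced by the original sequence $\{G_i\}$ and, for each given $X$, passes to a subsequence $\{H_n\}$ with $[H_{n+1}:H_n]>c(n+1)$, using Proposition \ref{CoarseEquivalenceUltrametricStructures} to see that the metric induced by the subsequence is coarsely equivalent to $d_G$ (both filtrations are cofinal in each other), so the embedding transfers. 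You instead fix one sparse subsequence once and for all, arrange only that the indices $b_n$ be non-decreasing and divergent, and then rescale the \emph{source}: you replace $d_{X_0}$ by $\phi\circ d_{X_0}$ for a suitable strictly increasing $\phi$ with $\phi(0)=0$, which for ultrametrics is again an ultrametric and is coarsely equivalent to the original. Both routes are valid; yours has the mild advantage that the universal metric $d_G$ is pinned down explicitly at the outset and Proposition \ref{CoarseEquivalenceUltrametricStructures} is not needed, at the cost of the (easy) extra verification that the stretch $\phi$ induces a coarse equivalence and that the ball bound $|B_{X'}(x,n+2)|\le b_n$ follows from $\phi(k)\ge M(k)+2$ and the monotonicity of $(b_n)$.
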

\begin{proof}
Consider a proper integral ultrametric space $(X,d_X)$ of bounded geometry and choose natural numbers $c(n)$ with the property that each ball $B(x,n+2)$, $x\in X$, contains at most $c(n)$ elements.
Replace $\{G_n\}$ by its subsequence $\{H_n\}$ such that the index of $H_n$ in $H_{n+1}$ is larger than $c(n+1)$
for each $n\ge 1$. 
By \ref{BasicCoarseEmbeddingIntoAGroup} and \ref{CoarseEquivalenceUltrametricStructures}, $(X,d_X)$ coarsely embeds into $(G,d_G)$.
\end{proof}

\begin{Corollary}
Let $G$ be a countable vector space over the $\mathbb{Z}/2\mathbb{Z}$ that is of infinite algebraic dimension. There is a proper integral ultrametric $d_G$ on $G$ such that $(G,d_G)$ is a universal space in the category of metric spaces of bounded geometry that have asymptotic dimension $0$.
\end{Corollary}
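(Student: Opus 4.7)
The plan is to derive this as a direct application of the preceding Theorem, whose hypothesis asks for a countable group that is the union of a strictly increasing sequence of finite subgroups. So the only real task is to exhibit such a filtration of $G$ by finite subgroups.

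Since $G$ is a vector space over $\mathbb{Z}/2\mathbb{Z}$ of countably infinite algebraic dimension, I would fix a countable basis $\{e_1,e_2,\ldots\}$ of $G$ and let $G_n$ be the $\mathbb{Z}/2\mathbb{Z}$-subspace spanned by $\{e_1,\ldots,e_n\}$. Each $G_n$ is a subgroup of $G$ of order $2^n$, hence finite; the inclusions $G_n\subsetneq G_{n+1}$ are strict since $e_{n+1}\in G_{n+1}\setminus G_n$; and $G=\bigcup_{n\ge 1}G_n$ because every element of $G$ is a finite $\mathbb{Z}/2\mathbb{Z}$-linear combination of the $e_i$. Countability of $G$ is automatic, as each $G_n$ is finite.

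Having established that $G$ satisfies the hypothesis of the preceding Theorem, I would invoke that result directly: it furnishes a proper integral ultrametric $d_G$ on $G$, constructed via the family $\{G_n\}_{n\ge 1}$ as in \ref{UltrametricsInducedBySubgroups}, such that $(G,d_G)$ is universal in the category of metric spaces of bounded geometry and asymptotic dimension $0$. There is no real obstacle here; the content of the corollary is entirely in the previous theorem, and the corollary merely records the observation that infinite-dimensional $\mathbb{Z}/2\mathbb{Z}$-vector spaces supply a natural and concrete example of the kind of countable locally finite group to which that theorem applies.
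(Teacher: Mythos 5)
Your proposal is correct and is exactly the intended argument: the paper states this corollary without proof, treating it as an immediate application of the preceding theorem, and your filtration $G_n = \operatorname{span}\{e_1,\ldots,e_n\}$ (finite of order $2^n$, strictly increasing, with union $G$) is precisely the verification needed. Nothing is missing.
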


\begin{Remark}
See \cite{BZ} and \cite{BHZ} for coarse classifications of groups of asymptotic dimension $0$.
\end{Remark}

\end{document}